\newtheorem{thm}{Theorem}[section]
\newtheorem{lem}[thm]{Lemma}
\newtheorem{prop}[thm]{Proposition}
\theoremstyle{definition}
\newtheorem{defn}[thm]{Definition}
\theoremstyle{remark}
\newtheorem{rem}[thm]{Remark}
\newtheorem{assump}{ \rm \textbf{Assumption}}
\Crefname{assump}{assumption}{assumptions}
\numberwithin{equation}{section}
\Crefname{prop}{Proposition}{Propositions}
\crefname{prop}{proposition}{propositions}
\Crefname{thm}{Theorem}{Theorems}
\crefname{thm}{theorem}{theorems}
\Crefname{assump}{Assumption}{Assumptions}
\crefname{assump}{assumption}{assumptions}
\def \argmax{\mbox{argmax}}
\def \argmin{\mbox{argmin}}
\def \argmin{\mbox{argmin}}
\def \supp{\mbox{supp}}
\newcommand{\BlueUpdate}[1]{{#1}}
\newcommand{\qRCCD}{$q$-RCCD}
\newcommand{\AbAlg}{AB2CD}
\newcommand\Tstrut{\rule{0pt}{2.6ex}}    
\newcommand{\stkout}[1]{\ifmmode\text{\sout{\ensuremath{#1}}}\else\sout{#1}\fi}
\pgfplotsset{compat=1.17}
\begin{document}

\title{On convergence of a $q$-random coordinate constrained algorithm for non-convex problems}
\author{A. Ghaffari-Hadigheh$^{1}$\thanks{Azarbaijan Shahid Madani University, Tabriz, Iran, {\tt Hadigheha@azaruniv.ac.ir}.
This research was initiated during the first author's sabbatical visit from Tilburg University.} \and L. Sinjorgo$^{2}$,\thanks{EOR Department, Tilburg University, The Netherlands, {\tt l.m.sinjorgo@tilburguniversity.edu}} 
\and R. Sotirov$^{3}$  \thanks{EOR Department, Tilburg University, The Netherlands, {\tt r.sotirov@uvt.nl}} }

\date{}

\maketitle
\begin{abstract}
We propose a random  coordinate descent algorithm for optimizing a non-convex objective function subject to one linear constraint and simple bounds on the variables. 
Although it is common use to update only two random coordinates simultaneously in each iteration of a coordinate descent algorithm, our
algorithm allows updating arbitrary number of coordinates. We provide a proof of convergence of the  algorithm. The convergence rate of the algorithm improves when we update more coordinates per iteration.
Numerical experiments on large scale instances of different optimization problems  show the benefit of updating many coordinates simultaneously.
\end{abstract}

{\bf Keywords:} random  coordinate descent algorithm, convergence analysis, densest $k$-subgraph problem, eigenvalue complementarity problem.\\

{\bf AMS subject classifications.} 
90C06, 90C30, 90C26.

\section{Introduction}
The prominence of so called ``Big Data" has given rise to new challenges for the field of optimization. Algorithms that aim to optimize large scale optimization problems should provide good solutions in reasonable time, be memory efficient and well scalable. 

Well suited methods for optimizing high-dimensional functions are coordinate descent (CD) methods. Coordinate descent algorithms solving  lasso penalized regression trace back to~\cite{fu1998penalized}.
In its most basic form, a CD method iteratively minimizes the objective function, by updating only a (strict) subset of variables per iteration. Variants of CD methods are distinguished by the selection procedure of the variables to be updated (e.g., cyclic, greedy, random), and consequently the method to update selected variables. Interested readers are referred to surveys~\cite{shi2016primer,wright2015coordinate} for more details on CD methods.
Coordinate descent methods find applications in various areas, including image denoising~\cite{mcgaffin2015edge}, sensor network localization~\cite{NishijimaNakata},  machine learning, such as Support Vector Machines~\cite{hsieh2008dual} and penalized regression~\cite{breheny2011coordinate}. 

Global convergence of random CD methods for convex objective functions was proved by \citeauthor{nesterov2012efficiency} \cite{nesterov2012efficiency}. \citeauthor{nesterov2012efficiency} \cite{nesterov2012efficiency} considered both constrained and unconstrained convex optimization problems, and provided an accelerated variant for the unconstrained case.
These results were  generalized in~\cite{richtarik2014iteration}, where \citeauthor{richtarik2014iteration} extended the analysis of \cite{nesterov2012efficiency} to include objective functions that are the sum of a smooth convex function and a simple nonsmooth block-separable convex function.
  In \cite{nesterov2014subgradient}, a class of huge-scale convex optimization problems with sparse subgradients has been considered.  The proposed method works well on problems with uniform sparsity of corresponding linear operators.

 Despite these results for convex optimization problems,  there are few developments on  solving large-scale non-convex problems.
  \citeauthor{patrascu2015efficient} \cite{patrascu2015efficient} introduced two 2-random coordinate descent algorithms for large scale structured non-convex optimization problems. 
The objective functions they consider are consisting of two terms where one is non-convex and smooth, and the other one is convex. One of the algorithms from~\cite{patrascu2015efficient} considers unconstrained problems, while the other one  singly linearly constrained problems.
Both algorithms are designed to update two block coordinates in each iteration. \citeauthor{cristofari2019almost}~\cite{cristofari2019almost} proposed an algorithm for optimization of a  non-convex function subject to one linear constraint and simple bounds on the variables.
The algorithm from~\cite{cristofari2019almost}   iteratively chooses a pair of cooridnates according to the almost cycling strategy, i.e.,  one variable is selected in a cyclic manner, and the other one not. 
This algorithm has deterministic convergence properties, which is not the case for random CD methods. 
Two random coordinate descent-based algorithms for solving a non-convex problem that allow updating two or more coordinates simultaneously, in each iteration, are introduced in~\cite{sotirov2020solving}. 
Extensive numerical tests in~\cite{sotirov2020solving} show better performance of the algorithms when more than two variables are simultaneously updated.  

Motivated by the algorithms from~\cite{sotirov2020solving},  we propose a random coordinate descent algorithm for minimization of a non-convex objective function subject to one linear constraint and bounds on the variables.  In each iteration, our $q$-random coordinate constrained descent (\qRCCD{}) algorithm randomly selects $q$ ($q \geq 2$) variables, with uniform probability.
The \qRCCD{} algorithm updates those variables based on the optimization of a convex approximation of the objective. In particular, updates of variables are based on a projected gradient method. For this purpose, we assume $q$-coordinate Lipschitz continuity of the gradient of the objective function. This assumption differs from the assumption on the  gradient of the  objective  from~\cite{sotirov2020solving}.
Moreover, we provide a proof of convergence of the \qRCCD{} algorithm, using techniques similar to those employed in \cite{patrascu2015efficient}.  
The rate of convergence for the expected values of an appropriate stationarity measure of the \qRCCD{} algorithm, coincides with the convergence rate of the algorithm from~\cite{patrascu2015efficient} when $q=2$ and blocks are of size one.
However, the performance of  the \qRCCD{} algorithm improves for $q$ larger than two. This  implies that  we improve on the work of \citeauthor{patrascu2015efficient} \cite{patrascu2015efficient}  when block sizes are of size one. We test the \qRCCD{} algorithm  for solving large scale instances of the densest $k$-subgraph  (D$k$S) problem  and the eigenvalue complementarity  (EiC) problem. 
Our numerical results show the benefit of updating more than two coordinates simultaneously. \BlueUpdate{Additionally, we compare the \qRCCD{} algorithm with an Alternating Direction Method of Multipliers (ADMM), see e.g.,~\cite{boyd2011distributed}, the projected gradient method (PGM)~\cite{calamai1987projected}, and the deterministic 2-coordinate descent algorithm by \citeauthor{beck20142} \cite{beck20142}. Our results clearly show that the \qRCCD{} algorithm is superior for most of the considered instances.}

The rest of this paper is organized as follows. We formally introduce the \qRCCD{} algorithm and the considered class of optimization problems in \Cref{Section_Qrcca}. We define a {stationarity measure} in \Cref{Section_Opt_Measure}, which we use to prove the convergence of the \qRCCD{} algorithm in \Cref{Section_ConvergenceAnalysis}. Numerical results are presented in Section \ref{sect:numerics}.

 \section{A \texorpdfstring{$q$}{q}-random coordinate constrained descent algorithm}
\label{Section_Qrcca}
Let  $f:\mathbb{R}^n\to \mathbb{R}$ be a non-convex differentiable  function,
and $a=(a_i) \in \mathbb{R}^n$ and $b \in \mathbb{R}$ given vectors.
Further, let $e=(e_i)\in \mathbb{R}^n$ and $g=(g_i)\in \mathbb{R}^n$ be vectors such that $e\leq g$.
We consider the following   non-convex optimization problem with one linear constraint and bounded variables: 
\begin{equation}
\label{general}
 \min \left \{ f(x) ~:~\displaystyle a^\top x=b, ~e_i\leq x_i\leq g_i, ~i\in [n] \right \}.
\end{equation}
Here $[n]$ denotes the set $\{1,\ldots,n\}$. If $a_i \neq 0$ for all $i \in [n]$, then without loss of generality, one may assume that $a_i = 1$ for all $i \in [n]$ by a simple rescaling argument, see e.g., \cite[Section 2.1]{beck20142}.

In many  applications, function $f$ is of the form $f(x)=x^\top Q x + c^\top x$ where $Q=(q_{ij})$ is an indefinite matrix and $c=(c_i)$ a given vector.
Examples are the continuous densest subgraph problem (see  Section \ref{Section_K_densest_subgraph}), the continuous quadratic knapsack problem, support vector machine training, the Chebyshev center problem, etc.

We denote the feasible  set of the optimization problem  \eqref{general}  by $P$. That is \begin{align}
\label{feasibleP}
P = \left \{ x\in \mathbb{R}^n: ~a^\top x=b, ~e_i\leq x_i\leq g_i, ~i\in [n] \right \}.
\end{align}
The first order necessary optimality condition for local solutions of \eqref{general} are given in the sequel.\\ 

\noindent\textbf{Necessary Optimality Condition}.
  If $x^* \in P$ is a local { minimum} of \eqref{general}, then there exist a $\lambda^* \in \mathbb{R}$ ,
and vectors  $\gamma^*, \delta^*\in \mathbb{R}^n_+$  with
\begin{eqnarray}
  \nabla f(x^*) +\lambda^*  a -\gamma^* +\delta^*&=&0, \label{Necessary_Optimality_Condition} \\
  a^\top x^* &=& b, \\
  g_j - x^*_j &\geq& 0, ~ j \in [n]\\
  x^*_j - e_j &\geq& 0,~ j \in [n]\\
   \gamma^*_j (x^*_j-e_j)&=&0,~ j \in [n] \label{Necessary_Optimality_Condition-1}\\
 \delta^*_j(g_j-x^*_j)&=&0, ~ j \in [n].\label{Necessary_Optimality_Condition-2}
\end{eqnarray}
{We refer to a vector $x^*$ satisfying \eqref{Necessary_Optimality_Condition}--\eqref{Necessary_Optimality_Condition-2} as a {\em stationary point}.}
We aim to devise an iterative algorithm with a  feasible descending random direction that converges to a {stationary point} of \eqref{general}.
Before we present our algorithm, we state the following assumption that applies to the whole paper.
\begin{assump}\label{assump2} 
Let  $q\in \mathbb{N}$, $q\geq 2$ be given.
  The function $f$  has $q$-coordinate Lipschitz continuous gradient at $x\in \mathbb{R}^n$,
i.e., for any $J  \subseteq [n]$ with $|J|=q$,  there exists a constant $L_{J} > 0$ such that:
\begin{equation}\label{q-coordinate}
  \| \nabla_J f(x+ s_J)- \nabla_J f(x) \| \leq L_J \|s_J\|,
\end{equation}
where $\| \cdot \|$ denotes the Euclidean norm,
$s_J \in \mathbb{R}^n$ is a vector with  elements  $s_{j}$ for $j \in J$, and zeros for $j\not \in J$. Analogously,
  $\nabla_J f(x)\in \mathbb{R}^n$ denotes the vector consisting of
 $\frac{\partial f}{\partial x_{j}}$ for $j\in J$ and zeros for $j\not \in J$. \label{gradianJ}
 
 To simplify notation,  we  also use  $\nabla_J f(x)$  for the  projection of the gradient onto  the subspace  $\mathbb{R}^q$ identified by $J$,
  when it is clear from the context what we mean.
\end{assump}
 By \Cref{assump2} and from ~\cite[Lemma 1.2.3]{nesterov2018lectures},  we have
\[ 
  |f(x+s_J)-f(x) - \langle \nabla_J f(x), s_J  \rangle| \leq \frac{L_J}{2} \|s_J\|^2,~~\forall x\in \mathbb{R}^n, s_J \in \mathbb{R}^n,
\]
from where it follows that
\begin{equation}\label{identical-final-result-of-q-coordinate}
  {f(x+s_J) \leq f(x) + \langle \nabla_J f(x), s_J  \rangle  +  \frac{L_J}{2} \|s_J\|^2,
  ~~\forall x\in \mathbb{R}^n, s_J \in \mathbb{R}^n.}
  \end{equation}
We use the quadratic approximation of $f$ in \eqref{identical-final-result-of-q-coordinate}  to obtain {descending} directions of our algorithm. In particular, in each iteration of our $q$-random  coordinate constrained descent algorithm  we update   $q\geq 2$ random coordinates by exploiting the right hand side of  \eqref{identical-final-result-of-q-coordinate}.

Let $x^{m}$ be a feasible solution of problem \eqref{general} in iteration $m$ of the \qRCCD{} algorithm.  Let $J_m=\{j^m_{1}, \ldots, j^m_{q}\}
 \subseteq [n]$ with $|J_m| = q$ be a set of random coordinates that needs to be updated simultaneously in step $m$. 
Our $q$-random coordinate constrained update in $m$-th iteration is  as follows: \begin{equation}\label{update-step}
   d_{J_m}(x^m) := \left\{ \begin{array}{lcc}
                 u^m_j(x^m) - x^m_j  & \mbox{ if } &  j \in J_m, \\[2mm]
                  0 &\mbox{ if }  & j \not\in J_m,
               \end{array}\right. 
 \end{equation}
 where $u^m(x^m)$ is the optimal solution of a convex optimization problem.  To simplify notation, we sometimes write $u^m$ instead of $u^m(x^m)$, and $d_{J_m}$ instead of $ d_{J_m}(x^m) $.
In view of \eqref{identical-final-result-of-q-coordinate},  we define {$u^m(x^m)$} as follows:
 \begin{equation}\label{update_subproblem}
\hspace{-1mm} \begin{array}{lrl}
     u^m(x^m) :=  & \displaystyle \underset{\bar{u} \in \mathbb{R}^q}{\argmin} &\displaystyle \sum_{j\in J_m} \frac{\partial f(x^m )}{\partial x_j} (\bar{u}_j -x^m_j)
    { + }\frac{L_{J_m}}{2}\sum_{j\in J_m}(\bar{u}_j -x^m_j)^2       \\[6mm]
    & \mbox{s.t.} & \displaystyle\sum_{j\in J_m}a_j \bar{u}_j =b- \sum_{j\notin J_m}a_j x^m_j = \sum_{j \in J_m}a_j x^m_j \\[6mm]
    &  &  e_j\leq \bar{u}_j \leq g_j, \qquad \qquad \forall j \in J_m.
 \end{array}
  \end{equation}
Let us consider \eqref{update_subproblem} in more detail. The objective function can be rewritten as
  \begin{align}
      \underset{\bar{u} \in \mathbb{R}^q}{\argmin} \quad  2 \Big\langle \frac{1}{L_{J_m}} \nabla_{J_m} f(x^m) -x_{J_m}^m, \bar{u} \Big\rangle + \bar{u}^\top \bar{u}= \underset{\bar{u} \in \mathbb{R}^q}{\argmin} ~\Big\| \bar{u} - \left (x_{J_m}^m - \frac{1}{L_{J_m}} \nabla_{J_m} f(x^m) \right ) \Big\|^2.
  \end{align}
  This shows that 
  \begin{align}
        \label{eqn_closedFormUpdate}
        u^m(x^m) = \Big( x_{J_m}^m - \frac{1}{L_{J_m}} \nabla_{J_m} f(x^m) \Big)_{\mathcal{P}},
  \end{align}
  where  $(\cdot)_\mathcal{P}$ denotes the orthogonal projection onto the feasible set of \eqref{update_subproblem}. As this set is convex, computing such a projection is equivalent to solving a convex quadratic programme, which can be done in polynomial time (see~\cite{KozTarKha79}). However, some specific vectors $a$, $e$ and $g$,  allow for the use of more efficient projection algorithms, as we will see in \Cref{sect:numerics}. \\

We update $x^m$ as follows:
\begin{equation}
\label{updateiterate}
x^{m+1} = x^m+ d_{J_m}(x^m), 
\end{equation}
and obtain the next feasible point of the optimization problem  \eqref{general}

\begin{rem}
The objective in \eqref{update_subproblem} differs from the objective in~\cite{sotirov2020solving}, where the following objective is used:
$$\sum_{j\in J_m} \frac{\partial f(x^m )}{\partial x_j} (\bar{u}_j^m -x^m_j)     {+} \sum_{j\in J_m} \frac{L_j}{2} (\bar{u}_j^m -x^m_j)^2,  $$     
as well as an assumption that $f$ is coordinatewise Lipschitz continuous with constants $L_j$ for $j\in J_m$.
\end{rem}

Let us summarize  the \qRCCD{} algorithm.
For fixed $q$  such that  $2 \leq  q < n $, the $q$-random coordinate  constrained descent algorithm is as follows.
The algorithm starts with a feasible solution for \eqref{general}. Then, chooses $q$ coordinates randomly by uniform distribution on $[n]$.
For this subset, a $q$-coordinate Lipschitz constant is calculated that satisfies \eqref{q-coordinate} and the auxiliary  quadratic
convex optimization problem~\eqref{update_subproblem} is solved.
If no stopping criteria  is satisfied, the algorithm calculates a new  feasible solution of \eqref{general} by using \eqref{updateiterate}, and continues.
The algorithm may stop if the difference in two consecutive objective values is less than a pre-specified tolerance.
For more details on the algorithm see the pseudo-code given by \Cref{Alg1}.

{In light of \eqref{eqn_closedFormUpdate}, the \qRCCD{} algorithm can be considered as a combination of coordinate descent and PGM \cite{calamai1987projected}. In fact, in the extreme case $q = n$, the \qRCCD{} algorithm is deterministic and equivalent to PGM with constant stepsize $1/L_{J}$ (note that $L_{J}$ is constant when $q = n$).}

\begin{algorithm}[ht!]
\caption{The \qRCCD{} algorithm for the non-convex  problem \eqref{general}} \label{Alg1}
\textbf{Input:} A feasible initial  solution $ x^0$ of \eqref{general}, {and parameter $q \in \mathbb{N}$, $q \geq 2$.} \\
\textbf{Initialize}: $m \leftarrow 0$. \\
 \While{stopping criteria not satisfied}{
Select  $J_m \subseteq [n]$, $|J_m|=q$ coordinates randomly by uniform distribution on $[n]$. \\
Calculate appropriate $L_{J_m}$. \\
{Compute}  {\eqref{eqn_closedFormUpdate}} and use \eqref{update-step}  to obtain $d_{J_m}(x^m) \in \mathbb{R}^n$. \\
Update $x^{m+1}$ by using \eqref{updateiterate}. \\
$m \leftarrow  m+1$. \\
}
\textbf{Output:} $f(x^{m-1})$
\end{algorithm}

Observe that the vector $u^m\in \mathbb{R}^q$ is an optimal solution of the convex optimization
 problem in~\eqref{update_subproblem} if and only if there exist $\lambda \in \mathbb{R}$,
and $\gamma, \delta \in \mathbb{R}_+^q$ such that 
\begin{eqnarray}
   \frac{\partial f(x^m)}{\partial x_j} {+ L_{J_m} (u^m_j-x^m_j)} +\lambda a_j {- \gamma_j + \delta_j }=0,&&   j\in J_m\label{update_subproblem-first_optimality condition} \\
        \sum_{j\in J_m}a_ju^m_j=~~~&&\hspace{-9mm} {b- \sum_{j\notin J_m}a_jx^m_j}  \label{update_subproblem-fourth_optimality condition}\\
    e_j \leq u^m_j \leq g_j,&&  j\in J_m  \label{update_subproblem-last_optimality condition}\\
    \gamma_j ( u^m_j - e_j)=0,& & j\in J_m \label{update_subproblem-second_optimality condition}\\
    \delta_j (g_j-u^m_j) =0,&& j\in J_m. \label{update_subproblem-third_optimality condition}
 \end{eqnarray}
 
Note that for $x^m \in P$, see \eqref{feasibleP}, a feasible solution $u^m$ for  \eqref{update_subproblem} leads to 
\begin{eqnarray}\label{eq3-1-1}
\begin{array}{rcl}
  \langle a_{J_m},u^m-x^m_{J}\rangle&=&\displaystyle\sum_{j\in J_m}a_ju^m_j - \sum_{j\in J_m}a_jx^m_j  \\[4mm]
  &=&b- \displaystyle \sum_{j\notin J_m}a_jx^m_j - \sum_{j\in J_m}a_jx^m_j= b- \sum_{j=1}^n a_j x^m_j =0,
\end{array}
  \end{eqnarray} 
where $a_{J_m}$ (resp.,~$x^m_{J}$) is the sub-vector of $a$  (resp.,~$x^m$) with indices in $J_m$.  Note that we omit subscript $m$ on $J_m$ in 
 $x^m_{J_m}$, and write $x^m_{J}$.
By abuse of notation, we  write \eqref{eq3-1-1} as
$$
\langle a_{J_m},d_{J_m}\rangle = 0.
$$
 
\subsection{Notation}
We define the set of all index sets of size $2 \leq q < n$ as
\begin{align}
    \label{eqn_Jdef}
    \mathcal{J}_q := \left \{ J~:~ J\subseteq [n],~  |J|=q \right \},
\end{align}
and often omit the subscript $q$ for brevity.
We assume that any two index sets $J, \, J^\prime \in \mathcal{J}_q$ have identical probability to be selected. We set
\begin{align}
\label{eqn_numberZ}
    z := | \mathcal{J}_q | = \binom{n}{q}.
\end{align}
By our previous assumption, each index set $J \in \mathcal{J}_q$ can be chosen with  probability $1/{z}$.
Note that for $|J|=q$ and fixed $i$ we have $\mu = \binom{n-1}{q-1}$ numbers of different $J$'s such that each of them contains element $i$.
 Therefore, given Lipschitz constants $L_J > 0$, $J \in \mathcal{J}_q$, we define the vector $\Gamma =(\Gamma_i) \in \mathbb{R}^n$ where:
  \begin{equation}\label{definition-of-gamma_i}
    \Gamma_i := \frac{1}{\mu}\sum_{\{ J:~ i\in J \} }  L_J,~~~i\in [n].
  \end{equation}
  We further define  the diagonal matrix
  \begin{align} \label{defintionofD}  
  D_{\Gamma} := {\rm Diag}(\Gamma) \in \mathbb{R}^{n\times n}. 
  \end{align}
  Here ${\rm Diag}$ is the operator
 that maps a vector into the diagonal matrix whose diagonal elements correspond to the elements of the input vector.
 Further, we define  the following pair of primal-dual norms:
\begin{equation}\label{definition-of-norms}
 \begin{array}{rcl}
   \|x\|_{\Gamma} & = &(x^\top D_{\Gamma}x)^{\frac{1}{2}} =\| D_\Gamma ^{\frac{1}{2}} x \|,
   ~~~~ \forall x\in \mathbb{R}^n, \\[4mm]
   \|y\|^*_{\Gamma} & = &    (y^\top D^{-1}_{\Gamma}y)^{\frac{1}{2}},
   ~~~ \forall y\in \mathbb{R}^n.
 \end{array}
 \end{equation}
We denote by $\textbf{1}$ the vector of all-ones. For comparison purpose of our method  with the one of \cite{patrascu2015efficient},  we follow a similar notation and methodology.

\section{Stationarity measure}
\label{Section_Opt_Measure}

In this section we introduce a function whose optimal solution  is used to identify a stationary point  of  optimization problem \eqref{general}.

Consider the polyhedron
\begin{align}\label{setS}
S(x) = \left \{s \in \mathbb{R}^n ~: ~a^\top s = 0, ~e \leq s+x \leq g \right \},
\end{align}
 for some $x \in \mathbb{R}^n$, and  the corresponding local polyhedron with respect to $J \subseteq [n]$, i.e., 
\begin{align}\label{setSJ}
S_J(x) = \{s \in S(x) ~: s_j = 0,~ \forall j\notin J \}.
\end{align}
Note that for any vector $s\in S(x)$ we have $a^\top (x +s) = b$ when $x\in P$, see \eqref{feasibleP}.  
Thus, any vector $s\in S_J(x)$ where $|J|=q$ plays the role of a feasible update direction in each iterate of \Cref{Alg1}.

For any  $x \in P$,   $\alpha>0$, and vector $\Gamma \in \mathbb{R}^n_+$,
we define  the convex function $\psi_{\alpha \Gamma} (\cdot~; x): \mathbb{R}^n   \rightarrow  \mathbb{R}$, as follows:
\begin{equation}\label{definition-of-psi}
\psi_{\alpha \Gamma} (s; x) := f (x) + \langle \nabla f (x), s\rangle + \frac{\alpha}{2} \|s\|^2_{\Gamma}.
   \end{equation}
Note that  when $x$ is a stationary point of \eqref{general} and $s=0$, all terms of \eqref{definition-of-psi} vanish but the first one.
Thus, $\psi_{\alpha \Gamma} (s; x)$ plays the role of an overestimator of the function $f(\cdot)$
in the vicinity of a {stationary point}, see \eqref{identical-final-result-of-q-coordinate}.

Let us consider the following minimization problem:
\begin{equation} \label{auxmax}
\min_{s\in S(x)}~ \psi_{ \alpha  \Gamma} (s; x).
\end{equation}
Since $\psi_{\alpha \Gamma}(\cdot~;x)$ is a convex function,  $s\in S(x)$   is a global optimum of the minimization problem \eqref{auxmax}  if and only if there exist a $\lambda \in \mathbb{R}$, $\gamma, \delta\in \mathbb{R}^n_+$ such that
\begin{equation}\label{optimality-conditions-of-maximizer-for-psi}
 \nabla f(x) { + \alpha D_{\Gamma} } \cdot s + \lambda a  { -\gamma +\delta} =0,
\end{equation}
\begin{align}\label{psi_NecessAndSuff}
\gamma^\top(s+x-e)=0,~\delta^\top(g-s-x)=0, ~a^\top s=0, ~e \leq s+x \leq g.  
\end{align}
Thus,
\begin{equation}\label{definition-of-maximizer-for-psi}
  d_{ \alpha \Gamma} (x) := ~\displaystyle\mathrel{\mathop{\argmin}_{s\in S(x)}}\psi_{ \alpha  \Gamma} (s; x)
  = \frac{1}{\alpha} D^{-1}_{\Gamma} \left ({ -\nabla f(x) -\lambda a +\gamma - \delta}\right),
\end{equation}
provided that $d_{ \alpha \Gamma} (x) \in S(x)$ and satisfies \eqref{psi_NecessAndSuff}.
\medskip

Let $x \in P$ be a feasible solution of the non-convex optimization problem  \eqref{general}.
We introduce the following \textit{stationarity measure}, see also  \cite{patrascu2015efficient},
\begin{equation}\label{optimality_measure}
  M_q(x, \Gamma) = \|d_{\mu \Gamma}(x )\|_{\Gamma},
\end{equation}
where $\mu = \binom{n-1}{q-1}$, $\Gamma$  is defined in \eqref{definition-of-gamma_i},
and $d_{\mu \Gamma}(x )$ is given in
\eqref{definition-of-maximizer-for-psi}.

\begin{lem}
\label{lem3.1}
Let $2\leq q< n$, and  ${\Gamma}\in \mathbb{R}^n_+$ be  defined as in \eqref{definition-of-gamma_i}.
 Then,    $x^* \in P$
  is a stationary point  of non-convex optimization problem  \eqref{general} if and only if  $M_q(x^*, {\Gamma}) = 0$.
\end{lem}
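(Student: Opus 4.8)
The plan is to show that the stationarity measure $M_q(x^*,\Gamma) = \|d_{\mu\Gamma}(x^*)\|_\Gamma$ vanishes exactly when $x^*$ satisfies the Necessary Optimality Conditions \eqref{Necessary_Optimality_Condition}--\eqref{Necessary_Optimality_Condition-2}. The key observation is that $d_{\mu\Gamma}(x^*) = 0$ if and only if $s = 0$ is the global minimizer of the convex problem $\min_{s\in S(x^*)} \psi_{\mu\Gamma}(s;x^*)$, because $D_\Gamma$ is positive definite (as $\Gamma \in \mathbb{R}^n_+$ with strictly positive entries, since each $L_J > 0$), so $\|\cdot\|_\Gamma$ is a genuine norm and $d_{\mu\Gamma}(x^*) = 0 \iff \|d_{\mu\Gamma}(x^*)\|_\Gamma = 0$.

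First I would note that $\psi_{\mu\Gamma}(\cdot~;x^*)$ is strictly convex on $S(x^*)$, so it has a unique minimizer, characterized by the KKT system \eqref{optimality-conditions-of-maximizer-for-psi}--\eqref{psi_NecessAndSuff}. Then I would argue both directions. For the ``only if'' direction: suppose $x^*$ is stationary, so \eqref{Necessary_Optimality_Condition}--\eqref{Necessary_Optimality_Condition-2} hold with multipliers $\lambda^*, \gamma^*, \delta^*$. I claim $s = 0$ together with these same multipliers satisfies \eqref{optimality-conditions-of-maximizer-for-psi}--\eqref{psi_NecessAndSuff}: plugging $s=0$ into \eqref{optimality-conditions-of-maximizer-for-psi} gives $\nabla f(x^*) + \lambda^* a - \gamma^* + \delta^* = 0$, which is exactly \eqref{Necessary_Optimality_Condition}; the complementarity conditions in \eqref{psi_NecessAndSuff} with $s=0$ read $\gamma^{*\top}(x^*-e) = 0$ and $\delta^{*\top}(g-x^*)=0$, which follow from \eqref{Necessary_Optimality_Condition-1}--\eqref{Necessary_Optimality_Condition-2} (sums of nonnegative terms each zero); and $a^\top 0 = 0$, $e \le x^* \le g$ hold since $x^* \in P$. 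Hence $d_{\mu\Gamma}(x^*) = 0$ and $M_q(x^*,\Gamma)=0$. Conversely, if $M_q(x^*,\Gamma) = 0$ then $d_{\mu\Gamma}(x^*) = 0$, so $s=0$ minimizes $\psi_{\mu\Gamma}$ over $S(x^*)$; reading the KKT conditions at $s=0$ backwards gives multipliers $\lambda, \gamma, \delta$ (with $\gamma,\delta \ge 0$) such that $\nabla f(x^*) + \lambda a - \gamma + \delta = 0$ and the complementarity relations hold, and since $x^* \in P$ the primal feasibility constraints of \eqref{general} are met, so $x^*$ is stationary.

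The main subtlety — rather than a genuine obstacle — is making sure that $d_{\mu\Gamma}(x^*) = 0$ is truly equivalent to $s=0$ being optimal, which hinges on $d_{\mu\Gamma}(x^*)$ being \emph{defined} as the argmin in \eqref{definition-of-maximizer-for-psi} (unique by strict convexity), so its being the zero vector is the same as the minimizer being $0$. The complementarity bookkeeping (going from componentwise products being zero to the inner-product form and back, using nonnegativity of $\gamma,\delta$ and of $x^*-e$, $g-x^*$) is routine and I would state it briefly. One should also remark explicitly that $\Gamma_i > 0$ for all $i$, so $D_\Gamma \succ 0$ and $D_\Gamma^{-1}$ in \eqref{definition-of-maximizer-for-psi} is well defined and $\|\cdot\|_\Gamma$ is a norm — this is what lets us pass between $M_q(x^*,\Gamma)=0$ and $d_{\mu\Gamma}(x^*)=0$.
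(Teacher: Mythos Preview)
Your proof is correct. For the direction $M_q(x^*,\Gamma)=0 \Rightarrow x^*$ stationary, you do exactly what the paper does: read off the KKT conditions of the auxiliary problem at $s=0$.

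For the converse, your route is genuinely different and more direct than the paper's. You verify that $s=0$, together with the multipliers $(\lambda^*,\gamma^*,\delta^*)$ furnished by stationarity of $x^*$, satisfies the KKT system \eqref{optimality-conditions-of-maximizer-for-psi}--\eqref{psi_NecessAndSuff} of the convex auxiliary problem; since KKT is sufficient for convex programs and the minimizer is unique by strict convexity, $d_{\mu\Gamma}(x^*)=0$. The paper instead writes down \emph{both} KKT systems (for $x^*$ and for $d_{\mu\Gamma}(x^*)$), subtracts them, takes the inner product with $d_{\mu\Gamma}(x^*)$, and uses the two sets of complementarity conditions together with $e \le d_{\mu\Gamma}(x^*)+x^* \le g$ to bound $\mu\|d_{\mu\Gamma}(x^*)\|_\Gamma^2 \le 0$. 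Your argument is shorter and avoids this algebra entirely. The paper's approach, though longer, does have one incidental benefit: the subtraction--inner-product manipulation it develops here is essentially the same computation reused in \Cref{lem_inProdIneq} and in the strong-convexity estimate \eqref{zeroAlmostsure}, so it serves as a warm-up for machinery needed later. For this lemma in isolation, however, your argument is cleaner.
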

\begin{proof}
Suppose that  $M_q(x^*, \Gamma)=0$. Then, it follows from \eqref{psi_NecessAndSuff} that $x^*$ is a stationary point for optimization problem  \eqref{general}, see also \eqref{Necessary_Optimality_Condition}--\eqref{Necessary_Optimality_Condition-2}.

Conversely,  suppose that $x^*$ is a stationary point for problem \eqref{general}. Since $x^*$ is a stationary point, it follows from \eqref{Necessary_Optimality_Condition} that there exist
$\lambda^* \in \mathbb{R}$,  $\gamma^*, \delta^*\in \mathbb{R}^n_+$ such that
\[
 \nabla f(x^*) +\lambda^*  a -\gamma^* +\delta^* = 0.
\]
The solution  of \eqref{auxmax} for $x=x^*$ and $\alpha:= \mu= \binom{n-1}{q-1}$, is denoted by $d_{\mu \Gamma} (x^*) \in S(x^*)$.
From \eqref{optimality-conditions-of-maximizer-for-psi} it follows that there exists a $\overline{\lambda} \in \mathbb{R}$ and $\overline{\gamma}, \, \overline{\delta} \in \mathbb{R}^n_+$ such that
\[
\nabla f(x^*) +  \mu D_{ \Gamma} \cdot  d_{ \mu \Gamma} (x^*)  +\overline{ \lambda}  a {-\overline{\gamma} +\overline{\delta} } =0.
\]
By subtracting the previous two equations, we obtain
\[
 \mu D_{ \Gamma} \cdot  d_{ \mu \Gamma} (x^*) + {(\bar{ \lambda} - \lambda^*)}a + (\gamma^* - \overline{\gamma}) - ( \delta^* - \overline{\delta})= 0.
\]
Taking the inner product with $d_{\mu \Gamma} (x^*)$ and using that $a^\top d_{ \mu \Gamma} (x^*)=0$, we obtain
\begin{align}
    \label{eq_part1}
    \mu \| d_{ \mu \Gamma} (x^*) \|^2_{\Gamma} + d_{ \mu \Gamma} (x^*) ^\top(\gamma^* - \delta^* - \overline{\gamma} +\overline{\delta}) = 0.
\end{align}
From the optimality conditions \eqref{psi_NecessAndSuff} we have
\begin{align}
    \overline{\gamma}^\top ( d_{ \mu \Gamma} (x^*) + x^* - e) = \overline{\delta}^\top (d_{ \mu \Gamma} (x^*) + x^*-g)=0,
\end{align}
and thus
\begin{align}
    \label{eq_StationarityPsi}
    \overline{\gamma}^\top  d_{ \mu \Gamma} (x^*) = \overline{\gamma}^\top (e-x^*) \quad \mbox{   and   } \quad \overline{\delta}^\top d_{ \mu \Gamma} (x^*) = \overline{\delta}^\top(g - x^*). 
\end{align}
Substituting \eqref{eq_StationarityPsi} in \eqref{eq_part1} and using that $e \leq d_{ \mu \Gamma} (x^*) + x^* \leq g$ yields
\begin{align}
    \mu \| d_{ \mu \Gamma} (x^*) \|_{\Gamma}^2 &= \overline{\gamma}^\top d_{ \mu \Gamma} (x^*) - \overline{\delta}^\top d_{ \mu \Gamma} (x^*) - \gamma^{*\top} d_{ \mu \Gamma} (x^*) + \delta^{*\top} d_{ \mu \Gamma} (x^*) \\
    &= \overline{\gamma}^\top (e - x^*) - \overline{\delta}^\top (x^* - g) - \gamma^{*\top} d_{ \mu \Gamma} (x^*) +\delta^{*\top} d_{ \mu \Gamma} (x^*) \\
    & \leq {0 + 0 } -  \gamma^{*\top}(e - x^*)  + \delta^{*\top} (g - x^*) \\
    &= 0,
\end{align}
from where it follows that $\| d_{ \mu \Gamma} (x^*) \|_{\Gamma} = M_q(x^*, \Gamma) =0$. 
\end{proof}
  
 We prove another lemma related to $d_{\alpha \Gamma}(x)$ and function $\psi$.
 \begin{lem}
     \label{lem_inProdIneq}
     Let $x \in P$, $\psi_{\alpha \Gamma} (\cdot~; x)$ as in \eqref{definition-of-psi}, and $d_{\alpha \Gamma}(x)$ according to \eqref{definition-of-maximizer-for-psi}. Then
    \begin{align}
        {\nabla \psi_{\alpha \Gamma} (d_{\alpha \Gamma}(x); x)^\top d_{\alpha \Gamma}(x) \leq 0.}
    \end{align}
 \end{lem}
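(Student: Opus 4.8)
The plan is to read off the first-order optimality characterization of $d_{\alpha\Gamma}(x)$ and combine it with the complementarity relations, in the same spirit as the second half of the proof of \Cref{lem3.1}. First I would make explicit that, viewing $\psi_{\alpha\Gamma}(\cdot\,;x)$ as a function of $s$ with $x$ held fixed, its gradient is $\nabla\psi_{\alpha\Gamma}(s;x)=\nabla f(x)+\alpha D_{\Gamma}s$; hence the quantity to be bounded is $\big(\nabla f(x)+\alpha D_{\Gamma}d_{\alpha\Gamma}(x)\big)^\top d_{\alpha\Gamma}(x)$.

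Next I would invoke \eqref{optimality-conditions-of-maximizer-for-psi}: there are $\lambda\in\mathbb{R}$ and $\gamma,\delta\in\mathbb{R}^n_+$ with $\nabla f(x)+\alpha D_{\Gamma}\cdot d_{\alpha\Gamma}(x)=-\lambda a+\gamma-\delta$. Substituting this into the inner product and using $d_{\alpha\Gamma}(x)\in S(x)$, hence $a^\top d_{\alpha\Gamma}(x)=0$, collapses the expression to $\gamma^\top d_{\alpha\Gamma}(x)-\delta^\top d_{\alpha\Gamma}(x)$. Then the complementarity part of \eqref{psi_NecessAndSuff}, namely $\gamma^\top(d_{\alpha\Gamma}(x)+x-e)=0$ and $\delta^\top(g-d_{\alpha\Gamma}(x)-x)=0$, lets me rewrite $\gamma^\top d_{\alpha\Gamma}(x)=\gamma^\top(e-x)$ and $\delta^\top d_{\alpha\Gamma}(x)=\delta^\top(g-x)$. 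Since $x\in P$ gives $e\le x\le g$ while $\gamma,\delta\ge 0$, we get $\gamma^\top(e-x)\le 0$ and $\delta^\top(g-x)\ge 0$, so $\gamma^\top d_{\alpha\Gamma}(x)-\delta^\top d_{\alpha\Gamma}(x)\le 0$, which is exactly the claimed inequality.

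I do not expect a genuine obstacle here; the proof is essentially a sign-chase. The only points needing care are bookkeeping: stating clearly that $\nabla\psi$ is the gradient in the $s$ variable, and noting that it is precisely the box feasibility of $x$ together with nonnegativity of the multipliers $\gamma,\delta$ that controls the signs of the two residual terms. This reuses the same mechanism already exploited in \Cref{lem3.1}.
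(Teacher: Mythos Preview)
Your argument is correct and follows essentially the same route as the paper's own proof: use \eqref{optimality-conditions-of-maximizer-for-psi} to replace $\nabla\psi_{\alpha\Gamma}(d_{\alpha\Gamma}(x);x)$ by $-\lambda a+\gamma-\delta$, kill the $a$-term via $a^\top d_{\alpha\Gamma}(x)=0$, and then use the complementarity relations in \eqref{psi_NecessAndSuff} together with $e\le x\le g$ to see that $\gamma^\top d_{\alpha\Gamma}(x)-\delta^\top d_{\alpha\Gamma}(x)\le 0$. The paper merely compresses your final sign check into one sentence.
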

 \begin{proof}
     From \eqref{optimality-conditions-of-maximizer-for-psi}, with $s =  d_{\alpha \Gamma}(x)$, and
     $\nabla \psi_{\alpha \Gamma}( s; x) =\nabla f(x) + \alpha D_{\Gamma} \cdot s$
     we have
     \begin{align}
          \nabla \psi_{\alpha \Gamma} (d_{\alpha \Gamma}(x); x) = - \lambda a + \gamma - \delta.
     \end{align}
     Taking the inner product with $d_{\alpha \Gamma}(x)$, and using that $a^\top d_{\alpha \Gamma}(x) = 0$, yields
     \begin{align}
        \label{eqn_inProdLemmaEq1}
         \nabla \psi_{\alpha \Gamma} (d_{\alpha \Gamma}(x); x)^\top d_{\alpha \Gamma}(x) = {\gamma^\top  d_{\alpha \Gamma}(x) - \delta^\top  d_{\alpha \Gamma}(x)}.
     \end{align}
    Nonpositivity of this expression then follows from substituting the first two equations from \eqref{psi_NecessAndSuff} in \eqref{eqn_inProdLemmaEq1}, where $s = d_{\alpha \Gamma}(x)$.
 \end{proof}

\section{Convergence analysis of the \texorpdfstring{\qRCCD{}}{q-RCCD}  algorithm}
\label{Section_ConvergenceAnalysis}
In this section we provide a convergence analysis of the \qRCCD{}   algorithm and its convergence rate.
We first define several terms and present known results that are needed later in the section.
\begin{defn}\label{stronglyConvex}
If the function  {$h$} is twice continuously differentiable, then it is strongly convex with parameter $\rho$ if and only if  {$\nabla ^{2}h(x)\succeq \rho I$}  for all $x$ in the domain.
An equivalent condition is 
\begin{align}
    { h(x_1) \geq h(x_2)+\nabla h(x_2)^\top(x_1-x_2)+{\frac {\rho}{2}}\|x_1-x_2\|^{2},}  
\end{align}
for every $x_1,x_2$ in the domain of  {$h$}.
\end{defn}

\medskip
Let $K \subseteq \mathbb{R}^n$ be an arbitrary subspace.
The set of indices corresponding to the nonzero coordinates in the vector $x\in K$ is called the \textit{support} of $x$, and denoted by $\supp(x)$.
For $s,s' \in \mathbb{R}^n $, we say that $s'$ is {\em conformal} to $s$ if 
 \begin{align}
    \label{eqn_conformalityConditions}
     \supp(s') \subseteq \supp(s), \quad s_j s'_j \geq 0,~ \forall j.
 \end{align}
The second condition in \eqref{eqn_conformalityConditions} states that  the nonzero components of $s'$ and the corresponding components of $s$  have the same signs.
A nonzero vector $s$ is an {\em elementary} vector of $K$  if $s\in K$ and there is no nonzero $s'\in K$ that is  conformal to $s$ and $\supp(s') \neq \supp(s)$. Any two elementary  vectors $s$ and $s'$  of $K$ with identical support must be (nonzero) scalar multiples of each other. 
Let us state the following well known result.

\begin{prop}[\cite{RockafellarTheEV}]\label{lem6}
Let $x$ be a nonzero vector in a subspace $K$ of $\mathbb{R}^n$  (or even generally in $\mathbb{R}^n$). Then, there exist elementary vectors $x_1, \ldots,x_r$  of $K$ such that $x=x_1+\cdots+ x_r$. They can be chosen such that each is  conformal to  $x$ and $r$ does not exceed the  dimension of $K$ or the number of elements in $\supp(x)$. %
\end{prop}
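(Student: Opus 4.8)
The plan is to prove the statement by induction on $|\supp(x)|$, the engine of the argument being to ``peel off'' a single elementary vector of $K$ that is conformal to $x$ and strictly decreases the support. If $x$ is itself elementary (in particular whenever $|\supp(x)| \leq 1$), there is nothing to do and we take $r = 1$; so assume $x$ is not elementary. I would also use repeatedly that conformality is transitive in the directed sense of \eqref{eqn_conformalityConditions}: if $w$ is conformal to $y$ and $y$ is conformal to $x$, then on $\supp(w) \subseteq \supp(y) \subseteq \supp(x)$ the signs of $w$ agree with those of $y$, which agree with those of $x$, so $w$ is conformal to $x$.

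The first step is to produce an elementary vector of $K$ that is conformal to $x$. Consider the collection of all nonzero vectors of $K$ conformal to $x$; it is nonempty since it contains $x$. Pick $v$ in this collection whose support is minimal with respect to inclusion. Then $v$ is elementary: if some nonzero $v' \in K$ were conformal to $v$ with $\supp(v') \subsetneq \supp(v)$, then by transitivity $v'$ would be conformal to $x$ with strictly smaller support than $v$, contradicting minimality. Next I would peel $v$ off. Since $v$ is conformal to $x$, for every $j$ with $v_j \neq 0$ we have $x_j v_j > 0$, hence $x_j/v_j > 0$; set $t^* := \min\{ x_j/v_j : v_j \neq 0 \} > 0$ and $x' := x - t^* v \in K$. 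By the choice of $t^*$ no coordinate of $x'$ changes sign relative to $x$ and no coordinate outside $\supp(x)$ becomes nonzero, so $x'$ is conformal to $x$; and at least one coordinate of $x'$ (any minimizer) vanishes, so $\supp(x') \subsetneq \supp(x)$. Writing $x_1 := t^* v$, which is elementary (a nonzero multiple of $v$) and conformal to $x$, we get $x = x_1 + x'$ with $|\supp(x')| < |\supp(x)|$. If $x' = 0$ we stop; otherwise the induction hypothesis applied to $x'$ gives $x' = x_2 + \cdots + x_r$ with each $x_i$ elementary, conformal to $x'$ hence to $x$, and $r - 1 \leq |\supp(x')| \leq |\supp(x)| - 1$, so $r \leq |\supp(x)|$.

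It remains to sharpen the count to $r \leq \dim K$. Here I would track, for each peeling step $i$, an index $k_i$ annihilated at that step, i.e. $k_i$ lies in the support of the residual entering step $i$ but not in the support of the residual leaving step $i$. Then $(x_i)_{k_i} \neq 0$, while $(x_j)_{k_i} = 0$ for every $j > i$, because the support of each later elementary vector is contained in the residual support after step $i$ (supports of successive residuals are nested and decreasing), which excludes $k_i$. Consequently no $x_i$ lies in the linear span of $x_{i+1}, \dots, x_r$, so $x_1, \dots, x_r$ are linearly independent and $r \leq \dim K$.

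The one genuinely delicate point is the existence of an elementary conformal vector, which rests on pairing the minimal-support argument with transitivity of conformality; the peeling step and the linear-independence count are then routine bookkeeping with supports and signs. (In the paper this is invoked as a known result of Rockafellar, so the above is the proof one would cite rather than reproduce.)
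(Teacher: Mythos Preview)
Your proof is correct and is essentially the standard argument for conformal decomposition into elementary vectors (minimal-support selection, peeling via the ratio $t^* = \min_{v_j \neq 0} x_j/v_j$, and a triangular linear-independence count for the bound $r \leq \dim K$). The paper does not give its own proof of this proposition---it is stated with a citation to Rockafellar and used as a black box---so there is no in-paper argument to compare against; what you wrote is precisely the proof one would supply, and your closing parenthetical already acknowledges this.
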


The following lemma provides an anologue of \Cref{lem6} for the specific polyhedron $S(x)$, instead of general subspaces.
\begin{lem}
    \label{lemma_ConformalVectors}
    Let $x \in P$ and $2 \leq q < n$. Any $s \in S(x)$ can be written as
    \begin{align}
        s = \sum_{J \in \mathcal{J}_q} s_J,
    \end{align}
    where each $s_J \in S_J(x)$, contains at most two nonzero entries, and conformal to $s$.
\end{lem}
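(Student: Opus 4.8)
The plan is to obtain the decomposition from \Cref{lem6}, applied to the hyperplane $K := \{v \in \mathbb{R}^n : a^\top v = 0\}$ (which contains $S(x)$), and then to repackage the resulting elementary vectors over the index family $\mathcal{J}_q$. First I would assume $s \neq 0$ (otherwise take every $s_J = 0$) and apply \Cref{lem6} to $K$ and $s \in K$, obtaining $s = v_1 + \dots + v_r$ with $r \leq n-1$, where each $v_k$ is an elementary vector of $K$ conformal to $s$. The structural fact I would establish is that every elementary vector $v$ of $K$ satisfies $|\supp(v)| \leq 2$: if $|\supp(v)| \geq 2$ and $a_i = 0$ for some $i \in \supp(v)$, then deleting the $i$-th coordinate of $v$ produces a nonzero shorter element of $K$ conformal to $v$, contradicting minimality; hence $a_i \neq 0$ throughout $\supp(v)$, and then $|\supp(v)| \geq 3$ is excluded by subtracting from $v$ a suitable multiple of a two-sparse vector of $K$ supported on two indices of $\supp(v)$ (proportional to $a_j$ in position $i$ and $-a_i$ in position $j$), which annihilates one coordinate without creating any sign change. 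Thus each $v_k$ carries at most two nonzero entries.

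Next I would check that each $v_k$ is an admissible local direction. Conformality together with $\sum_\ell v_\ell = s$ forces, coordinate by coordinate, $(v_k)_j$ to lie between $0$ and $s_j$; since $x \in P$ gives $e_j \leq x_j \leq g_j$ and $s \in S(x)$ gives $e_j \leq x_j + s_j \leq g_j$, convexity of the interval $[e_j,g_j]$ yields $e_j \leq x_j + (v_k)_j \leq g_j$. Combined with $a^\top v_k = 0$ (because $v_k \in K$), this shows $v_k \in S_J(x)$ for every $J \supseteq \supp(v_k)$ with $|J| = q$, and such a $J$ exists since $|\supp(v_k)| \leq 2 \leq q < n$; moreover $v_k$ is conformal to $s$ by construction.

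Finally I would relabel the $v_k$ by \emph{distinct} elements of $\mathcal{J}_q$. After merging any $v_k$ that share a support (two elementary vectors of $K$ with the same support are positive scalar multiples, so their sum is again elementary and conformal to $s$), the supports $\supp(v_1), \dots, \supp(v_r)$ are pairwise distinct. Then I would invoke Hall's theorem to pick pairwise distinct $J_1, \dots, J_r \in \mathcal{J}_q$ with $\supp(v_k) \subseteq J_k$: a support of size $t \in \{1,2\}$ is contained in $\binom{n-t}{q-t}$ members of $\mathcal{J}_q$, and since there are only $r \leq n-1$ supports, the set of admissible supersets of any sub-collection of them has cardinality at least that of the sub-collection. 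Setting $s_J := v_k$ when $J = J_k$ and $s_J := 0$ otherwise, and noting that $0 \in S_J(x)$ (as $x \in P$) and is conformal to $s$, gives $s = \sum_{J \in \mathcal{J}_q} s_J$ with each $s_J \in S_J(x)$, conformal to $s$, and with at most two nonzero entries, as required.

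I expect the relabeling to be the only genuinely delicate point: an ad hoc padding of the two-sparse supports to $q$-sets can produce repeated $q$-sets (already for small cases such as $n=5$, $q=3$), so a system-of-distinct-representatives argument is really needed, and the case where all $n-1$ supports have size exactly two deserves a short separate check (distinguishing whether they share a common coordinate, since the crude bound $\binom{n-2}{q-2}\ge t$ is then off by one). By contrast, the support bound for elementary vectors of $K$ and the feasibility and conformality of the individual pieces are routine once \Cref{lem6} is in hand.
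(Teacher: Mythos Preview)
Your proposal is correct and follows the same strategy as the paper: apply \Cref{lem6} to the hyperplane $K=\{v:a^\top v=0\}$, use that its elementary vectors have at most two nonzero coordinates, and verify via conformality and $x\in P$ that each piece lands in $S(x)$.

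The one place you diverge is the relabeling. The paper disposes of the case $q>2$ by citing an injection $h:\mathcal{J}_2\to\mathcal{J}_q$ coming merely from $|\mathcal{J}_q|>|\mathcal{J}_2|$ and then asserting $s^t\in S_{h(J)}(x)$; but this needs $J\subseteq h(J)$, which an arbitrary injection does not provide, and the paper also does not address possible collisions among the two-element supports themselves. Your route---merge pieces with identical support, then invoke a Hall-type system-of-distinct-representatives argument to assign distinct $q$-sets containing each support---actually repairs this gap, and your flagging of the borderline case $r=n-1$ shows you see where the count is tight. You also spell out why elementary vectors of a hyperplane carry at most two nonzero entries, which the paper states without justification. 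In short: same approach, but you are more careful precisely where the paper is loose.
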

\begin{proof}
    Let $x \in P$, $s \in S(x)$ and
    \begin{align}
        K = \{ u \in \mathbb{R}^n \, : \, a^\top u = 0 \}.
    \end{align}
    Vector $s$ is contained in subspace $K$ and thus, by \Cref{lem6}, it follows that $s$ can be written as {$s = \sum_{t = 1}^r s^t$}, where each $s^t$ is an elementary vector of $K$ and conformal to $s$. By the conformality property, the entries of these vectors satisfy
    \begin{align}
        &0 \leq s^t_i \leq s_i \quad \forall i \text{ such that } s_i \geq 0, \\ &s_i \leq s^t_i \leq 0 \quad \forall i \text{ such that } s_i \leq 0. 
    \end{align}
    As $s \in S(x)$, its entries satisfy
    \begin{align}
        e_i - x_i \leq s_i \leq g_i - x_i.
    \end{align}
    Since $x \in P$ (and thus, $e \leq x \leq g$), we have $e_i - x_i \leq 0 \leq g_i - x_i$. Therefore,
    \begin{align}
    \label{eqn_sTinclusion}
    \left.
        \begin{array}{ll}
        &0 \leq s^t_i \leq s_i \leq g_i - x_i \quad \forall i \text{ such that } s_i \geq 0, 
        \\ &e_i - x_i \leq s_i \leq s^t_i \leq 0 \quad \forall i \text{ such that } s_i \leq 0. 
        \end{array}
        \right\} \Longrightarrow e - x \leq s^t \leq g - x \, \, \forall t.
    \end{align}
    Moreover, $s^t \in K$, which implies $a^\top s^t = 0$. Combined with \eqref{eqn_sTinclusion}, this shows that $s^t \in S(x)$, see \eqref{setS}. Furthermore, each $s^t$ is an elementary vector of $K$, which implies that $s^t$ has at most two nonzero entries. This shows that
    \begin{align}
        \label{eqn_sInSj}
        s^t \in S_J(x), \text{ for some } J \in \mathcal{J}_2,
    \end{align}
    which proves the lemma for $q = 2$. For any $q > 2$, the lemma follows from noting that $| \mathcal{J}_q | > | \mathcal{J}_2 |$, which proves the existence of injective function $h: \mathcal{J}_2 \to \mathcal{J}_q$. Then, \eqref{eqn_sInSj} can be transformed in
    \begin{align}
        s^t \in S_{h(J)}(x), \text{ for some } h(J) \in \mathcal{J}_q.
    \end{align}
\end{proof}
We remark that, for any 2 vectors $s_J$ and $s_{J^\prime}$ appearing in the decomposition of \Cref{lemma_ConformalVectors}, we have
\begin{align}
    \label{eqn_conformalProperty}
    s_{J}^\top s_{J^\prime}^{\phantom{\top}} \geq 0,    
\end{align}
which follows from the conformality property.

In  the convergence analysis, we use properties of separable functions. We say that a convex function $h:\mathbb{R}^n \rightarrow \mathbb{R}$ is {\em additively  separable} if 
$$h(x) = \sum_{j=1}^{n} h_j(x_j),~~~\forall x\in \mathbb{R}^n,$$
where $h_j:\mathbb{R}\rightarrow \mathbb{R}$,  $j\in [n]$ are convex functions.  For example, the function $\|x\|^2$ is  additively separable   since
  $ \|x\|^2 = \sum_{j=1}^{n}  x_j^2$.

\begin{lem}\cite[Lemma 6.1]{tseng2009} \label{lem3.5}
 Let $h$ be a convex, additively  separable function.  For any $x$, $x+d$ from the domain of $h$, let $d$ be expressed as $d = \sum_{t=1}^{r} d^t$, for some $r\geq 1$   and  nonzero $d^t \in \mathbb{R}^n$  conformal to $d$ for all $t \in [r]$.
 Then
  $$ h(x+d) -h(x) \geq \sum_{t=1}^{r}\left ( h(x+ d^t)-h(x) \right ).$$
\end{lem}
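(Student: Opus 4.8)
The plan is to use the additive separability of $h$ to reduce the statement to a one–dimensional inequality, and then to invoke the superadditivity of a univariate convex function that vanishes at the origin, with conformality supplying the sign information that makes this applicable.

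First I would write $h(x)=\sum_{j=1}^n h_j(x_j)$ with each $h_j$ convex, and observe that both sides of the asserted inequality split coordinatewise. Hence it suffices to prove, for each fixed $j\in[n]$,
\[
h_j(x_j+d_j)-h_j(x_j)\ \geq\ \sum_{t=1}^r\bigl(h_j(x_j+d^t_j)-h_j(x_j)\bigr),
\qquad d_j=\sum_{t=1}^r d^t_j .
\]
Fix such a $j$. If $d_j=0$, then conformality of each $d^t$ to $d$ (the conditions $\supp(d^t)\subseteq\supp(d)$ and $d_j d^t_j\geq 0$) forces $d^t_j=0$ for all $t$, and the inequality is the trivial $0\geq 0$. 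Otherwise all the $d^t_j$ share the sign of $d_j$; by symmetry assume $d_j>0$, so that $0\leq d^t_j\leq d_j$ for every $t$ and all partial sums $\sum_{t=1}^k d^t_j$ lie in $[0,d_j]$.

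Next I would set $g(s):=h_j(x_j+s)-h_j(x_j)$ for $s\in[0,d_j]$; this interval lies in the domain of $h_j$ since $x_j$ and $x_j+d_j$ do and the domain is convex, so $g$ is a well-defined convex function with $g(0)=0$. The key step is the superadditivity bound: for $a,b\geq 0$ with $a+b\leq d_j$,
\[
g(a)=g\!\Bigl(\tfrac{a}{a+b}(a+b)+\tfrac{b}{a+b}\cdot 0\Bigr)\leq \tfrac{a}{a+b}\,g(a+b),\qquad
g(b)\leq \tfrac{b}{a+b}\,g(a+b),
\]
by convexity and $g(0)=0$; adding gives $g(a+b)\geq g(a)+g(b)$. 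A one-line induction on the number of summands, using that each partial sum stays in $[0,d_j]$, then yields $g\bigl(\sum_{t=1}^r d^t_j\bigr)\geq\sum_{t=1}^r g(d^t_j)$, which is exactly the coordinatewise inequality displayed above. Summing over $j\in[n]$ completes the proof.

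The only genuinely substantive point is recognizing that a convex function vanishing at $0$ is superadditive on a half-line, together with the observation that conformality is precisely what guarantees all the $d^t_j$ lie on the same side of the origin so that this one-dimensional fact can be applied; the remaining work (the separable decomposition, the domain checks, and the induction) is routine bookkeeping.
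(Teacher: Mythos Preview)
Your proof is correct. Note, however, that the paper does not supply its own proof of this lemma: it is quoted verbatim as \cite[Lemma~6.1]{tseng2009} and used as a black box, so there is no in-paper argument to compare against. Your coordinatewise reduction followed by the superadditivity of a convex function vanishing at the origin is exactly the standard way this result is established (and is essentially how Tseng proves it), with conformality playing precisely the role you identify---forcing all $d^t_j$ to lie on the same side of $0$ so that the one-dimensional superadditivity applies.
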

Note that vectors $d^t$, $t \in [r]$ in \Cref{lem3.5} do not have to be elementary. We need the following result in the proof of \Cref{Lemma_expected_value}.
\begin{lem}\label{lem3.6}
Let $L_J$ be defined  as in  \Cref{assump2}, $S(x)$ (resp.,~$S_J(x)$) defined as in \eqref{setS} (resp.,~\eqref{setSJ}), and $x\in P$, see~\eqref{feasibleP}. Assume that $s \in S(x)$ is expressed as $s=\sum_{J\in \mathcal{J}}  s_J$ where $s_J \in S_J(x)$  is conformal to $s$. 
 Then 
  $$\sum_{J\in \mathcal{J}} L_J  \|s_J\|^2 \leq \| \sum_{J\in \mathcal{J}} \sqrt{L_J}  s_J \|^2.$$
\end{lem}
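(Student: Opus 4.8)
The plan is to expand the right-hand side $\| \sum_{J \in \mathcal{J}} \sqrt{L_J} s_J \|^2$ as a double sum and show that the cross terms are nonnegative, so that only the diagonal terms $\sum_J L_J \|s_J\|^2$ survive as a lower bound. Concretely, I would write
\[
\Big\| \sum_{J \in \mathcal{J}} \sqrt{L_J}\, s_J \Big\|^2 = \sum_{J \in \mathcal{J}} L_J \|s_J\|^2 + \sum_{\substack{J, J^\prime \in \mathcal{J} \\ J \neq J^\prime}} \sqrt{L_J}\sqrt{L_{J^\prime}}\, s_J^\top s_{J^\prime}^{\phantom{\top}},
\]
and observe that the first term is exactly the left-hand side of the claimed inequality. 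Hence it suffices to show that every cross term $\sqrt{L_J}\sqrt{L_{J^\prime}}\, s_J^\top s_{J^\prime}$ is nonnegative. Since $L_J, L_{J^\prime} > 0$ by \Cref{assump2}, this reduces to showing $s_J^\top s_{J^\prime} \geq 0$ for all $J, J^\prime \in \mathcal{J}$.

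The nonnegativity of the inner products $s_J^\top s_{J^\prime}$ is precisely the conformality property recorded in \eqref{eqn_conformalProperty}: all the vectors $s_J$ in the decomposition are conformal to the common vector $s$, so for each coordinate index $i$ the $i$-th entries $(s_J)_i$ and $(s_{J^\prime})_i$ either vanish or share the sign of $s_i$, and therefore $(s_J)_i (s_{J^\prime})_i \geq 0$; summing over $i$ gives $s_J^\top s_{J^\prime} \geq 0$. (If one wants to be self-contained rather than invoke \eqref{eqn_conformalProperty} directly, this sign argument is the one-line justification.) Plugging this back into the expansion yields
\[
\Big\| \sum_{J \in \mathcal{J}} \sqrt{L_J}\, s_J \Big\|^2 \geq \sum_{J \in \mathcal{J}} L_J \|s_J\|^2,
\]
which is the desired statement.

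There is essentially no genuine obstacle here: the proof is a direct computation once the conformality of the $s_J$'s is in hand, and that conformality is part of the hypothesis (it is also guaranteed by \Cref{lemma_ConformalVectors}, which supplies such decompositions in the first place). The only point requiring a modicum of care is making sure the argument does not secretly need $\sqrt{L_J}\sqrt{L_{J^\prime}} = \sqrt{L_J L_{J^\prime}}$ to behave well — it does, since all $L_J$ are strictly positive reals — and that the assumption "$s_J$ is conformal to $s$" is being used for the pairwise sign condition rather than a stronger pairwise conformality among the $s_J$'s themselves (conformality to a common vector $s$ already forces coordinatewise sign agreement between any two of them, which is all that is needed).
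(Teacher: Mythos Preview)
Your proof is correct. The paper takes a slightly different route: it invokes \Cref{lem3.5} (Tseng's lemma on additively separable convex functions) with $h(x)=\|x\|^2$, $x=0$, and the decomposition $\sum_J \sqrt{L_J}\,s_J$, which immediately yields the inequality. Your argument is the elementary unpacking of that lemma in this special case --- you expand the square and use conformality (via \eqref{eqn_conformalProperty}) to control the cross terms directly. The paper's route is shorter once \Cref{lem3.5} is available and situates the inequality inside a general framework; your route is self-contained and avoids appealing to an external result, at the cost of being specific to the squared Euclidean norm. Either way the substance is the same: conformality forces all pairwise inner products $s_J^\top s_{J'}$ to be nonnegative.
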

\begin{proof}
Define $h(x):=\|x\|^2$. From \Cref{lem3.5},  we have
\[
  \big\|\displaystyle\sum_{J\in \mathcal{J}}   \sqrt{L_J}  s_J\big\|^2 \geq \sum_{J\in \mathcal{J}} \big[ \|0 +  \sqrt{L_J}  s_J\|^2 - \|0\|^2\big] = \sum_{J\in \mathcal{J}} L_J \| s_J\|^2.
\]
Formally, \Cref{lem3.5} requires all the terms $\sqrt{L_J} s_J$ appearing in the decomposition to be nonzero, which might not be the case here. To resolve this, one can sum over a subset of $\mathcal{J}$ given by $\{ J \in \mathcal{J} \, | \, s_J \neq 0 \}$.
\end{proof}
Let us introduce the set 
$$\xi^k = \{ J_0, J_1, \ldots, J_k\},$$
where $J_i \in \mathcal{J}_q$ ($i=0,1, \ldots,k$) are random selections obtained from \Cref{Alg1}.

\begin{prop}\label{Lemma_expected_value}
Let $\{x^{m}\}$ be a sequence generated by \Cref{Alg1}  using the uniform distribution.  
   Then the following inequality holds for all~$m$:
\begin{equation}\label{expected_value}
  E \left[
  \psi_{L_{J_m} \textbf{1}}(d_{J_m}(x^m); x^m) | \xi^{m-1}   \right] \leq
  \left(  1-\frac{1}{z} \right) f(x^{m}) + \frac{1}{z}
   \psi_{\mu \Gamma}(d_{\mu \Gamma}(x^m); x^m),
     \end{equation}
where $z=\binom{n}{q}$, $\mu =\binom{n-1}{q-1}$ and $\psi_{\alpha\Gamma}(\cdot~;x^m)$ is defined in \eqref{definition-of-psi}.
\end{prop}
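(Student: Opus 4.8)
The plan is to expand the conditional expectation over the uniform random choice of $J_m$ and then to bound each summand by the value of $\psi_{L_J\textbf{1}}(\cdot~;x^m)$ at a carefully chosen feasible test direction obtained from a conformal decomposition of $d_{\mu\Gamma}(x^m)$. Conditioning on $\xi^{m-1}$ fixes $x^m$, and $J_m$ is then drawn uniformly over $\mathcal{J}_q$, so
\[
E \left[ \psi_{L_{J_m}\textbf{1}}(d_{J_m}(x^m);x^m) \,|\, \xi^{m-1} \right] = \frac1z \sum_{J\in\mathcal{J}_q} \psi_{L_J\textbf{1}}\big(d_J(x^m);x^m\big).
\]
By the reformulation of \eqref{update_subproblem} leading to \eqref{eqn_closedFormUpdate}, the direction $d_J(x^m)$ is the minimizer of $s\mapsto\psi_{L_J\textbf{1}}(s;x^m)=f(x^m)+\langle\nabla f(x^m),s\rangle+\tfrac{L_J}{2}\|s\|^2$ over the local polyhedron $S_J(x^m)$ of \eqref{setSJ}; hence $\psi_{L_J\textbf{1}}(d_J(x^m);x^m)\le\psi_{L_J\textbf{1}}(s;x^m)$ for every $s\in S_J(x^m)$.

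Next I would decompose $d_{\mu\Gamma}(x^m)\in S(x^m)$, see \eqref{definition-of-maximizer-for-psi}, via \Cref{lemma_ConformalVectors} as $d_{\mu\Gamma}(x^m)=\sum_{J\in\mathcal{J}_q}s_J$ with each $s_J\in S_J(x^m)$ conformal to $d_{\mu\Gamma}(x^m)$. Plugging $s=s_J$ into the inequality above, summing over $J\in\mathcal{J}_q$, and using $\sum_{J\in\mathcal{J}_q}s_J=d_{\mu\Gamma}(x^m)$ gives
\[
\frac1z \sum_{J\in\mathcal{J}_q} \psi_{L_J\textbf{1}}\big(d_J(x^m);x^m\big) \le f(x^m) + \frac1z \big\langle \nabla f(x^m), d_{\mu\Gamma}(x^m) \big\rangle + \frac1{2z} \sum_{J\in\mathcal{J}_q} L_J \|s_J\|^2 .
\]
Since $\psi_{\mu\Gamma}(d_{\mu\Gamma}(x^m);x^m)=f(x^m)+\langle\nabla f(x^m),d_{\mu\Gamma}(x^m)\rangle+\tfrac{\mu}{2}\|d_{\mu\Gamma}(x^m)\|_\Gamma^2$ and the right-hand side of \eqref{expected_value} equals $f(x^m)+\tfrac1z\langle\nabla f(x^m),d_{\mu\Gamma}(x^m)\rangle+\tfrac{\mu}{2z}\|d_{\mu\Gamma}(x^m)\|_\Gamma^2$, the statement reduces to the single inequality $\sum_{J\in\mathcal{J}_q}L_J\|s_J\|^2\le\mu\|d_{\mu\Gamma}(x^m)\|_\Gamma^2$.

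To prove this I would chain three facts. \Cref{lem3.6} gives $\sum_{J}L_J\|s_J\|^2\le\big\|\sum_{J}\sqrt{L_J}\,s_J\big\|^2=\sum_{i=1}^n\big(\sum_{J:\,i\in J}\sqrt{L_J}\,(s_J)_i\big)^2$, where $(s_J)_i$ denotes the $i$-th entry of $s_J$, which vanishes for $i\notin J$. For each fixed $i$, Cauchy--Schwarz bounds $\big(\sum_{J:\,i\in J}\sqrt{L_J}\,(s_J)_i\big)^2$ by $\big(\sum_{J:\,i\in J}L_J\big)\big(\sum_{J:\,i\in J}(s_J)_i^2\big)$; and since conformality forces every $(s_J)_i$ to share the sign of $\big(d_{\mu\Gamma}(x^m)\big)_i$, all cross terms are nonnegative and $\sum_{J:\,i\in J}(s_J)_i^2\le\big(\sum_{J:\,i\in J}(s_J)_i\big)^2=\big(d_{\mu\Gamma}(x^m)\big)_i^2$. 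Summing over $i$ and using $\sum_{J:\,i\in J}L_J=\mu\Gamma_i$ from \eqref{definition-of-gamma_i} yields $\sum_{J}L_J\|s_J\|^2\le\mu\sum_{i}\Gamma_i\big(d_{\mu\Gamma}(x^m)\big)_i^2=\mu\|d_{\mu\Gamma}(x^m)\|_\Gamma^2$, which closes the argument.

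I expect this last paragraph to be the main obstacle: one must pass from a sum of blockwise $L_J$-weighted squared norms to a single $\Gamma$-weighted norm, and it is the averaging structure of $\Gamma$ in \eqref{definition-of-gamma_i} together with the conformal sign information that keeps the estimate sharp enough; a Cauchy--Schwarz bound ignoring the signs would be too lossy. A minor technicality is that the decomposition of \Cref{lemma_ConformalVectors} produces $s_J=0$ for all but a bounded number of $J$, but this is harmless, since $0\in S_J(x^m)$ and $\psi_{L_J\textbf{1}}(0;x^m)=f(x^m)$, so those indices simply contribute the trivial term $f(x^m)/z$ and every inequality above remains valid verbatim.
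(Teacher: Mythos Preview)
Your argument is correct and follows the same route as the paper: expand the conditional expectation over the uniform choice of $J_m$, use the optimality of $d_J$ over $S_J(x^m)$, invoke the conformal decomposition of \Cref{lemma_ConformalVectors}, and reduce everything to the single estimate $\sum_{J\in\mathcal{J}_q} L_J\|s_J\|^2\le\mu\|d_{\mu\Gamma}(x^m)\|_\Gamma^2$. For this last bound the paper also starts from \Cref{lem3.6} but then uses the pointwise inequality $L_J\le\mu\min_{i\in J}\Gamma_i$ (so that, by conformality, $\|\sum_J\sqrt{L_J}\,s_J\|^2\le\|\sum_J D_{\mu\Gamma}^{1/2}s_J\|^2=\mu\|d_{\mu\Gamma}(x^m)\|_\Gamma^2$), whereas you follow \Cref{lem3.6} with a coordinate-wise Cauchy--Schwarz and the defining identity $\sum_{J\ni i}L_J=\mu\Gamma_i$; both variants are valid and of comparable length.
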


\begin{proof}
(See also  \cite[Lemma 8]{patrascu2015efficient}.)
For the sake of brevity, we omit $m$ and $q$ from the notations. Therefore,  the current  point is denoted by  $x$, $L_{J_m}$ is replaced by $L_{J}$,     $\xi^{ m-1}$ by $\xi$, and 
$\mathcal{J}_q$ by $\mathcal{J}$. 

From  the definition of 
$\psi_{\alpha \Gamma} (\cdot~; x)$, see  \eqref{definition-of-psi}, and the definition of 
$d_{J}$, see \eqref{update-step}, it follows:
  \begin{equation}\label{eq16}
    \psi_{L_J \textbf{1}} (d_{J}(x);x) \leq   \psi_{L_J \textbf{1}} (s_J;x), ~\forall s_J \in S_J(x).
  \end{equation}
  Now, by taking expectation on both sides  of \eqref{eq16} w.r.t.~random variable $J\in \mathcal{J}$ (see \eqref{eqn_Jdef}) conditioned on $\xi$,   it follows:
\begin{align}
\label{eq3-6-12}
    E\left[\psi_{L_J \textbf{1}}(d_{J}(x); x) | \xi   \right]  {\leq} f(x) + \frac{1}{z}\left[ \sum_{J\in\mathcal{J}}
      \langle \nabla_J f (x), s_J\rangle {+ \sum_{J\in\mathcal{J}} \frac{L_J}{2} \|s_J\|^2} \right],
\end{align}
where  $s_J \in S_J(x)$, and $z$ is defined in \eqref{eqn_numberZ}. 
In particular, \eqref{eq3-6-12} holds for  $s_J \in S_J(x)$, $J\in \mathcal{J}$  such that each $s_J$ is conformal to $d_{\mu \Gamma}(x)$ and $d_{\mu \Gamma}(x) = \sum_{J \in \mathcal{J}} s_J$, see \eqref{definition-of-maximizer-for-psi}. Note that such decomposition exists by \Cref{lemma_ConformalVectors}. 

From the definition of $\Gamma_i$ \eqref{definition-of-gamma_i}, it follows that
\begin{align}
    \label{eqn_Lj_ineq}
    L_J \leq \mu \min_{i \in J}  \{ \Gamma_i \}.
\end{align}
Combining the definition of $D_{\Gamma}$ \eqref{defintionofD}, inequalities \eqref{eqn_conformalProperty} and \eqref{eqn_Lj_ineq}, and \Cref{lem3.6}, we find
\begin{align}
     \sum_{J\in\mathcal{J}} L_J \| s_J \|^2 =  \sum_{J\in\mathcal{J}}  \| \sqrt{L_J} s_J \|^2 \leq  \| \sum_{J\in\mathcal{J}}   \sqrt{L_J} s_J \|^2 \leq  \| \sum_{J\in\mathcal{J}}  D_{\mu \Gamma}^{1/2} s_J \|^2 = \mu \| d_{\mu \Gamma}(x) \|_{\Gamma}^2.
\end{align}
Substituting this inequality in \eqref{eq3-6-12} yields
\begin{align}
    E\left[\psi_{L_J \textbf{1}}(d_{J}(x); x) | \xi   \right]  & {\leq }   f(x) + \frac{1}{z}\bigg[  \langle \nabla f (x), d_{\mu \Gamma}(x) \rangle +
    \frac{\mu}{2} \| d_{\mu \Gamma}(x)\|_{\Gamma}^2  \bigg] \\
    &= \Big( 1-\frac{1}{z} \Big) f(x) + \frac{1}{z}\left [ \psi_{\mu \Gamma}(d_{\mu \Gamma}(x); x) \right].
\end{align}
\end{proof}

The following lemma  shows that the optimal solution of the convex optimization subproblem  \eqref{update_subproblem} provides  an descending direction for the non-convex minimization problem~\eqref{general}.

\begin{lem}\label{lemma3-6}
Let $\{x^{m}\}$ be a sequence generated by \Cref{Alg1}.
For the update step $d_{J_m}$, defined in \eqref{update-step}, it holds that
\begin{equation}
  f(x^m+d_{J_m}) {\leq f(x^m)}.
\end{equation}
\end{lem}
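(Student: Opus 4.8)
The plan is to sandwich $f(x^m+d_{J_m})$ between two copies of $f(x^m)$, using the quadratic overestimator of $f$ from \Cref{assump2} on one side and the optimality of the subproblem update on the other. First I would apply the descent inequality \eqref{identical-final-result-of-q-coordinate} with $s_J = d_{J_m}(x^m)$, which is admissible because $d_{J_m}$ is supported on $J_m$. Since $d_{J_m}$ vanishes outside $J_m$, the linear term $\langle \nabla_{J_m} f(x^m), d_{J_m}\rangle$ equals $\langle \nabla f(x^m), d_{J_m}\rangle$, and $\tfrac{L_{J_m}}{2}\|d_{J_m}\|^2$ coincides with $\tfrac{L_{J_m}}{2}\|d_{J_m}\|^2_{\mathbf 1}$; hence the right-hand side of \eqref{identical-final-result-of-q-coordinate} is exactly $\psi_{L_{J_m}\mathbf 1}(d_{J_m}(x^m);x^m)$, see \eqref{definition-of-psi}. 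This gives
\[
f(x^m+d_{J_m}) \;\leq\; \psi_{L_{J_m}\mathbf 1}\big(d_{J_m}(x^m);x^m\big).
\]

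Next I would use that $d_{J_m}(x^m)$ minimizes $\psi_{L_{J_m}\mathbf 1}(\cdot\,;x^m)$ over $S_{J_m}(x^m)$ — this is precisely inequality \eqref{eq16}, which itself follows from rewriting the subproblem \eqref{update_subproblem} in the shifted variable $s=\bar u - x^m_J$. Because $x^m \in P$ is feasible (so $e\le x^m\le g$ and $a^\top x^m=b$), the zero vector lies in $S_{J_m}(x^m)$, see \eqref{setSJ}. Evaluating \eqref{eq16} at $s_J=0$ and using $\psi_{L_{J_m}\mathbf 1}(0;x^m)=f(x^m)$ yields
\[
\psi_{L_{J_m}\mathbf 1}\big(d_{J_m}(x^m);x^m\big) \;\leq\; \psi_{L_{J_m}\mathbf 1}(0;x^m) \;=\; f(x^m).
\]
Chaining the two displayed inequalities gives $f(x^m+d_{J_m}) \leq f(x^m)$.

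I do not expect a genuine obstacle here; the argument is a short two-step chain. The only points that need (routine) care are verifying that $0\in S_{J_m}(x^m)$, which is immediate from feasibility of $x^m$, and observing that the inner products and norms appearing in \eqref{identical-final-result-of-q-coordinate} and in the definition of $\psi$ agree precisely because $d_{J_m}$ is supported on $J_m$. If one prefers a self-contained argument avoiding a forward reference to \eqref{eq16}, the inequality $\psi_{L_{J_m}\mathbf 1}(d_{J_m}(x^m);x^m)\le f(x^m)$ can instead be obtained directly from the KKT conditions \eqref{update_subproblem-first_optimality condition}--\eqref{update_subproblem-third_optimality condition} together with $a^\top d_{J_m}=0$, but invoking \eqref{eq16} is cleaner.
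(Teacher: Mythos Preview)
Your argument is correct. The two-step chain --- Lipschitz overestimate \eqref{identical-final-result-of-q-coordinate} followed by comparing the subproblem optimum against the feasible point $s_J=0$ --- is sound, and the verification that $0\in S_{J_m}(x^m)$ is indeed immediate from $x^m\in P$. (Incidentally, \eqref{eq16} appears in the proof of \Cref{Lemma_expected_value}, which precedes this lemma in the paper, so invoking it is not a forward reference.)

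The paper takes a different route. Rather than comparing with $s_J=0$, it works directly with the KKT system \eqref{update_subproblem-first_optimality condition}--\eqref{update_subproblem-third_optimality condition}: taking the inner product of the stationarity equation with $d_J$ and using $\langle a_J,d_J\rangle=0$ together with the complementarity conditions yields the identity
\[
L_J\|d_J\|^2 + \langle \nabla_J f(x), d_J\rangle + \langle \gamma, x_J-e_J\rangle + \langle \delta, g_J-x_J\rangle = 0,
\]
which, substituted into \eqref{identical-final-result-of-q-coordinate}, gives the quantitative bound
\[
f(x+d_J)\;\le\; f(x) - \tfrac{L_J}{2}\|d_J\|^2 - \langle \gamma, x_J-e_J\rangle - \langle \delta, g_J-x_J\rangle.
\]
Your approach is shorter and proves the lemma as stated, but this sharper inequality is not a byproduct of it. The paper relies on precisely this estimate (as \eqref{eq17}) in the proof of \Cref{ThmM_congergence} to conclude $\|d_{J_m}(x^m)\|\to 0$ almost surely, so the KKT computation is doing double duty there.
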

\begin{proof}
For the sake of brevity, we omit $m$ from the notations. Therefore, the current  point is denoted by  $x$, the update step is $d_J$, $J_m$ is replaced by  $J$ and $L_{J_m}$ by $L_{J}$.

Let $\gamma, \delta \in \mathbb{R}_+^q$
satisfy \eqref{update_subproblem-second_optimality condition} and \eqref{update_subproblem-third_optimality condition}. We rewrite \eqref{update_subproblem-first_optimality condition}
in a vector notation 
as follows:
\begin{equation}\label{eq3-7-1}
     L_J d_J + \nabla_Jf(x) +\lambda a_J -\gamma +\delta =0,
\end{equation}
where  $d_J$ (resp.,~$a_J$), is the projection of $d$ (resp.,~$a$) onto the subspace $\mathbb{R}^q$ identified by $J$. Inner vector product of \eqref{eq3-7-1} with $d_J$ leads to
\[ 
L_J \langle  d_J,d_J\rangle + \langle \nabla_Jf(x),d_J\rangle +\lambda \langle a_J,d_J\rangle   -\langle \gamma,d_J\rangle +\langle \delta, d_J\rangle = 0. \label{eq3-10}
\] 
Considering \eqref{eq3-1-1} we have that $\langle a_J,d_J\rangle = 0$.
 From the equations  \eqref{update_subproblem-second_optimality condition} and \eqref{update_subproblem-third_optimality condition} we obtain
\begin{eqnarray*}
 \langle \gamma, d_J\rangle = \langle \gamma, u_J-x_J\rangle  &=& \langle \gamma , u_J - e_J \rangle - \langle \gamma, x_J - e_J \rangle= - \langle \gamma, x_J - e_J \rangle, \\
 \langle \delta , d_J\rangle  = \langle \delta, u_J-x_J\rangle &=& \langle \delta  , u_J-g_J \rangle + \langle \delta ,g_J-x_J\rangle
  =\langle \delta , g_J-x_J\rangle,
\end{eqnarray*}
 respectively.  Therefore,
   \begin{equation}\label{eq3-11}
   L_J \|d_J\|^2 + \langle \nabla_Jf(x),d_J\rangle  + \langle \gamma,x_J - e_J\rangle +\langle \delta, g_J-x_J \rangle =0.
   \end{equation}
Lastly, the substitution of \eqref{eq3-11} in \eqref{identical-final-result-of-q-coordinate}, with $d_{J}$  instead of $s_J$,  leads to 
\begin{eqnarray}
\label{eq7-4}
  f(x + d_{J}) 
  & { \leq } &  f(x) - \frac{L_{J}}{2} \| d_{J} \|^2  - \langle \gamma,x_J - e_{J}\rangle -\langle \delta, g_{J}-x_J \rangle \\
  &{ \leq}&  f(x), \nonumber
\end{eqnarray}
which completes the proof.
\end{proof}

\Cref{lemma3-6} shows that the objective value of \eqref{general} in the next iteration of \Cref{Alg1} is not larger than the objective value in the current iteration.

For the proof of convergence, we need the following well-known property of supermartingales.
\begin{thm} \label{lem2} 
(See, e.g., \cite[Theorem 11.5]{williams1991probability})
 Let $(X)_{n\geq 1}$ be a supermartingale such that   {$\sup_n E\Big[ |X_n| \Big] <+\infty$}.  Then $\lim_{n \rightarrow \infty}  X_n = X^*$ exists almost surely (a.s.). 
  \end{thm}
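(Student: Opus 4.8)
The plan is to prove this via Doob's classical upcrossing argument, which handles supermartingales directly without first reducing to the martingale case. Fix rationals $a<b$ and, for the finite sequence $X_1,\dots,X_N$, let $U_N[a,b]$ denote the number of upcrossings of the interval $[a,b]$, defined through the usual alternating stopping times $\sigma_1<\tau_1<\sigma_2<\tau_2<\cdots$, where $\sigma_k$ is the first time after $\tau_{k-1}$ that the process is $\le a$ and $\tau_k$ the first time after $\sigma_k$ that it is $\ge b$. First I would introduce the predictable ``betting'' process $C_n\in\{0,1\}$ that equals $1$ precisely on the index blocks $\sigma_k< n\le \tau_k$, i.e.\ while we are ``holding a position'' between a down-crossing below $a$ and the next up-crossing above $b$. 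Setting $Y_N=\sum_{n=1}^N C_n\,(X_n-X_{n-1})$, a pathwise bookkeeping argument shows that each completed upcrossing contributes at least $b-a$ to $Y_N$ and the possibly-unfinished last excursion contributes at least $-(X_N-a)^-$, so that pathwise $Y_N\ge (b-a)\,U_N[a,b]-(X_N-a)^-$.

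Next I would invoke the supermartingale hypothesis. Since $C$ is nonnegative, bounded and predictable and $(X_n)$ is a supermartingale, the discrete stochastic integral $(Y_N)$ is again a supermartingale started at $Y_0=0$, hence $E[Y_N]\le 0$. Combining this with the pathwise bound gives the upcrossing inequality
\begin{equation}
 (b-a)\,E\big[U_N[a,b]\big]\ \le\ E\big[(X_N-a)^-\big]\ \le\ |a|+\sup_{n}E\big[|X_n|\big]=:C_a<+\infty .
\end{equation}
Letting $N\to\infty$ and applying monotone convergence yields $E\big[U_\infty[a,b]\big]\le C_a/(b-a)<\infty$, hence $U_\infty[a,b]<\infty$ almost surely.

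To conclude, note that on the event $\{\liminf_n X_n<a<b<\limsup_n X_n\}$ the path upcrosses $[a,b]$ infinitely often, so this event is null; taking the union over all pairs of rationals $a<b$ shows that $\liminf_n X_n=\limsup_n X_n$ a.s., i.e.\ $X^*:=\lim_{n\to\infty} X_n$ exists a.s.\ as an element of $[-\infty,+\infty]$. Finiteness of $X^*$ follows from Fatou's lemma: $E[|X^*|]\le\liminf_n E[|X_n|]\le\sup_n E[|X_n|]<+\infty$, so $X^*$ is integrable and therefore finite almost surely.

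I expect the main obstacle to be the upcrossing inequality itself --- in particular the careful definition of the predictable process $C$, the verification of the pathwise lower bound on $Y_N$, and the ``you cannot beat the system'' fact that $(Y_N)$ inherits the supermartingale property. Everything after that (monotone convergence, the rational-pairs union bound, and the Fatou step) is routine. An alternative route would be to apply Doob's decomposition $X_n=M_n-A_n$ with $(M_n)$ a martingale and $(A_n)$ predictable nondecreasing, bound $E[A_n]$ using $E[M_n]=E[M_0]$ together with the lower bound on $E[X_n]$, and then apply the $L^1$-bounded martingale convergence theorem to $(M_n)$; but this merely repackages the same upcrossing estimate, so I would present the direct argument above.
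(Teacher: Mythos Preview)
Your proof is correct and is essentially the classical Doob upcrossing argument, exactly as presented in the cited reference \cite[Theorem~11.5]{williams1991probability}. However, note that the paper does \emph{not} supply its own proof of this theorem: it is stated as a well-known result from the literature and invoked as a tool inside the proof of \Cref{ThmM_congergence}. So there is nothing to compare against beyond the external citation, and your write-up matches that standard source.
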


The following two theorems present  convergence properties of \Cref{Alg1}.
\begin{thm}
\label{ThmM_congergence}
  Let the  objective function $f(x)$ of problem~\eqref{general} satisfy  \Cref{assump2}, and let the sequence $\{x^{m}\}$ be generated by \Cref{Alg1}  using the uniform distribution. Then, the sequence $\{f(x^m)\}$ converges to a random variable $\bar{f}$ a.s., and
 the sequence of random variables $\{ M_q(x^m, {\Gamma})\}$ converges to 0 almost surely.
\end{thm}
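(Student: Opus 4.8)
The plan is to establish the two claimed almost sure limits by combining the per-iteration descent estimate of \Cref{lemma3-6}, the expectation bound of \Cref{Lemma_expected_value}, and the supermartingale convergence theorem (\Cref{lem2}). First I would argue that $\{f(x^m)\}$ is a bounded-below (since $f$ is continuous on the compact polyhedron $P$) nonincreasing sequence of random variables: nonincreasing by \Cref{lemma3-6}, and each $f(x^m)$ is $\xi^{m-1}$-measurable. Hence $\{f(x^m)\}$ is trivially a supermartingale with $\sup_m E[|f(x^m)|]<\infty$, so by \Cref{lem2} it converges a.s.\ to some random variable $\bar f$. This also gives $E\big[f(x^m) - f(x^{m+1})\big] \to 0$ by monotone/dominated convergence.

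The key quantitative step is to lower-bound the expected one-step decrease $E[f(x^m) - f(x^{m+1}) \mid \xi^{m-1}]$ by a positive multiple of $M_q(x^m,\Gamma)^2$. To do this I would chain the inequalities: from \eqref{identical-final-result-of-q-coordinate} with $s_J = d_{J_m}$ one gets $f(x^{m+1}) = f(x^m + d_{J_m}) \le \psi_{L_{J_m}\mathbf 1}(d_{J_m}(x^m); x^m)$ — indeed the right-hand side of \eqref{identical-final-result-of-q-coordinate} is exactly $\psi_{L_{J_m}\mathbf 1}(d_{J_m};x^m)$ minus $f(x^m)$ plus $f(x^m)$, matching the definition \eqref{definition-of-psi} with $\Gamma = \mathbf 1$ and $\alpha = L_{J_m}$. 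Taking conditional expectation and invoking \Cref{Lemma_expected_value},
\begin{align*}
E\big[f(x^{m+1}) \mid \xi^{m-1}\big] \le \Big(1 - \tfrac1z\Big) f(x^m) + \tfrac1z \, \psi_{\mu\Gamma}(d_{\mu\Gamma}(x^m); x^m),
\end{align*}
hence
\begin{align*}
E\big[f(x^m) - f(x^{m+1}) \mid \xi^{m-1}\big] \ge \tfrac1z\big( f(x^m) - \psi_{\mu\Gamma}(d_{\mu\Gamma}(x^m); x^m)\big).
\end{align*}
It remains to bound $f(x^m) - \psi_{\mu\Gamma}(d_{\mu\Gamma}(x^m);x^m)$ below by a constant times $\|d_{\mu\Gamma}(x^m)\|_\Gamma^2 = M_q(x^m,\Gamma)^2$. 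Here I would use strong convexity of $s \mapsto \psi_{\mu\Gamma}(s;x^m)$ with parameter $\mu\,\min_i\Gamma_i$ (from \Cref{stronglyConvex}, since its Hessian is $\mu D_\Gamma$), together with \Cref{lem_inProdIneq}: writing $d = d_{\mu\Gamma}(x^m)$ and comparing $\psi$ at $d$ versus at $0$ via the strong convexity inequality evaluated at $x_1 = 0$, $x_2 = d$, gives $\psi_{\mu\Gamma}(0;x^m) = f(x^m) \ge \psi_{\mu\Gamma}(d;x^m) + \nabla\psi_{\mu\Gamma}(d;x^m)^\top(0-d) + \tfrac{\mu\min_i\Gamma_i}{2}\|d\|^2 \ge \psi_{\mu\Gamma}(d;x^m) + \tfrac{\min_i\Gamma_i}{2}\|d\|_\Gamma^2$, where the last step drops the nonnegative term $-\nabla\psi_{\mu\Gamma}(d;x^m)^\top d \ge 0$ by \Cref{lem_inProdIneq} and uses $\mu\|d\|^2 \ge \|d\|_\Gamma^2 \cdot (\min_i\Gamma_i / \max_i \Gamma_i)$... more cleanly, $\mu\min_i\Gamma_i \|d\|^2 \ge \min_i\Gamma_i \cdot \|d\|_\Gamma^2$ is false in general, so I would instead keep $\tfrac{\mu \,\Gamma_{\min}}{2}\|d\|^2$ and relate $\|d\|^2$ to $\|d\|_\Gamma^2$ via $\|d\|_\Gamma^2 \le \Gamma_{\max}\|d\|^2$, yielding $f(x^m) - \psi_{\mu\Gamma}(d;x^m) \ge \tfrac{\mu\Gamma_{\min}}{2\Gamma_{\max}} M_q(x^m,\Gamma)^2 =: c\, M_q(x^m,\Gamma)^2$ with $c>0$ a universal constant.

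Combining, $E\big[f(x^m) - f(x^{m+1}) \mid \xi^{m-1}\big] \ge \tfrac{c}{z} M_q(x^m,\Gamma)^2 \ge 0$. Summing over $m$ and taking total expectations, $\sum_{m} E[M_q(x^m,\Gamma)^2] \le \tfrac{z}{c}\,\big(f(x^0) - \inf_P f\big) < \infty$, so $E[M_q(x^m,\Gamma)^2] \to 0$ and in particular $\sum_m M_q(x^m,\Gamma)^2 < \infty$ a.s.\ (by Tonelli, the sum has finite expectation hence is a.s.\ finite), which forces $M_q(x^m,\Gamma) \to 0$ almost surely. This completes both claims, the first already having been shown via \Cref{lem2}. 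The main obstacle I anticipate is the bookkeeping around the norms $\|\cdot\|$ versus $\|\cdot\|_\Gamma$ and pinning down the strong-convexity constant of $\psi_{\mu\Gamma}$ correctly so that the lower bound genuinely scales like $M_q(x^m,\Gamma)^2$; everything else is a routine assembly of the lemmas already proved, plus the standard supermartingale argument. One should also be slightly careful that $\xi^{m-1}$-conditioning is used consistently and that the tower property is applied correctly when passing from conditional to unconditional summable bounds.
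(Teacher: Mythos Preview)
Your approach is correct and in fact more economical than the paper's, but it follows a genuinely different route. The paper proves \Cref{ThmM_congergence} via a \emph{sandwich} argument: it first extracts $\|d_{J_m}(x^m)\|\to 0$ a.s.\ from the descent inequality, then establishes both a lower bound (via \Cref{Lemma_expected_value}) and a separate \emph{upper} bound (via conformal decomposition and Jensen's inequality) on $\psi_{\mu\Gamma}(d_{\mu\Gamma}(x^m);x^m)$, shows both bounds converge a.s.\ to $\bar f$, and then uses the strong-convexity gap \eqref{zeroAlmostsure} to force $M_q(x^m,\Gamma)\to 0$. You bypass the upper-bound/sandwich step entirely: you combine $f(x^{m+1})\le \psi_{L_{J_m}\mathbf 1}(d_{J_m};x^m)$ with \Cref{Lemma_expected_value} and the strong-convexity gap to obtain the telescoping summability $\sum_m E[M_q(x^m,\Gamma)^2]<\infty$, then conclude a.s.\ convergence via Tonelli. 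This is precisely the mechanism the paper deploys \emph{later} for the rate in \Cref{thm:convergenceRate}; your argument therefore proves both theorems in one shot, whereas the paper's proof of \Cref{ThmM_congergence} does not by itself yield a rate.

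One cleanup: your norm bookkeeping is unnecessarily lossy. Since $\psi_{\mu\Gamma}(\,\cdot\,;x)$ is quadratic with Hessian exactly $\mu D_\Gamma$, its second-order Taylor expansion is an \emph{equality}:
\[
\psi_{\mu\Gamma}(0;x^m)=\psi_{\mu\Gamma}(d;x^m)-\nabla\psi_{\mu\Gamma}(d;x^m)^\top d+\tfrac{\mu}{2}\|d\|_\Gamma^2,
\]
and together with \Cref{lem_inProdIneq} this gives directly $f(x^m)-\psi_{\mu\Gamma}(d_{\mu\Gamma}(x^m);x^m)\ge \tfrac{\mu}{2}M_q(x^m,\Gamma)^2$ (this is the paper's \eqref{zeroAlmostsure}). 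There is no need to detour through the Euclidean-norm strong-convexity constant $\mu\Gamma_{\min}$ and then convert back, losing the factor $\Gamma_{\min}/\Gamma_{\max}$; with $c=\mu/2$ your telescoping bound becomes exactly $\tfrac{\mu}{2z}=\tfrac{q}{2n}$, matching the constant in \Cref{thm:convergenceRate}.
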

\begin{proof}
We have, 
\begin{equation}\label{eq17}
  f(x^{m+1}) {\leq }  f(x^m) - \frac{L_{J_m}}{2}\|d_{J_m}(x^m) \|^2-\langle \gamma,x^m_{J}-e_{J_m}\rangle
  - \langle \delta, g_{J_m} -x^m_{J_m} \rangle,
\end{equation}
$\forall m\geq 0$,
see~\eqref{eq7-4}. This inequality shows that the objective function decreases in each step of  \Cref{Alg1} meaning that the algorithm is descending.
Applying expectation conditioned on $\xi^{m-1}$  
we obtain
\begin{align}\label{eq17-2}
         E\bigg[f(x^{m+1})|\xi^{m-1}\bigg]  {\leq } E\bigg[f(x^m)|\xi^{m-1}\bigg] 
     - E\bigg[\frac{L_{J_m}}{2}\|d_{J_m}(x^m) \|^2+ \langle \gamma,x^m_J-e_{J_m}\rangle +\langle \delta, g_{J_m}
  -x^m_J \rangle |\xi^{m-1}\bigg].
        \end{align}

Considering the fact that {$E[ \big|f(x^{m+1}) \big| \, | \, \xi^{m-1}]<\infty$}, since $\{x^m\}$ is in the bounded set $P$ and $f$ is differentiable, the martingale convergence \Cref{lem2}  states that  $\{f(x^m)\}$ converges to a random variable $\bar{f}$ a.s.~when $m\rightarrow \infty$. 
Due to the almost sure convergence of the sequence $\{f(x^m)\}$, it can be immediately observed  that
$$\lim_{m\rightarrow \infty} \left (  f(x^m ) - f(x^{m+1}) \right ) = 0 \mbox{  a.s}.$$
Considering this, it follows  from \eqref{eq17} that
\begin{equation}
  { \lim_{m\rightarrow \infty}}\langle \gamma,x^m_J-e_{J_m} \rangle    +\langle \delta, g_{J_m} -x^m_J \rangle
   \leq {\lim_{m\rightarrow \infty}} - \frac{L_{J_m}}{2}  \| d_{J_m}(x^m)\|^2\mbox{  a.s.}
\end{equation}
The fact that $\langle \gamma,x^m_J-e_{J_m}\rangle    +\langle \delta,  g_{J_m} -x^m_J \rangle $ is nonnegative, and the right hand side is nonpositive implies
\begin{equation} \label{AlmostSure}
 \lim_{m\rightarrow \infty} \langle \gamma,x^m_J-e_{J_m} \rangle    +\langle \delta, g_{J_m} - x^m_J \rangle = 0,  \lim_{m\rightarrow \infty}\| d_{J_m}(x^m)\|=\|x^{m+1} -x^m\| =0 \mbox{ a.s.}
\end{equation}

From \eqref{AlmostSure}, we also conclude that
$$E\bigg[\frac{L_{J_m}}{2}\|d_{J_m}(x^m) \|^2+ \langle \gamma,x^m_J-e_{J_m} \rangle +\langle \delta, g_{J_m} 
  -x^m_J \rangle |\xi^{m-1}\bigg] \rightarrow 0, \mbox{ a.s.}$$

Let us now prove the second statement. From \Cref{Lemma_expected_value}, we obtain a sequence which bounds $\psi_{\mu \Gamma} (d_{\mu \Gamma}; x)$ from below as follows:
\begin{equation}\nonumber
 z E\left[\psi_{L_{J_m} \textbf{1}}(d_{J_m}(x^m); x^m) | \xi^{m-1}   \right] - (z-1) f(x^m) {\leq} \psi_{\mu \Gamma} (d_{\mu\Gamma}(x^m); x^m).
\end{equation}
Further,  it follows from \Cref{lemma_ConformalVectors} that any $s \in S(x)$ has a conformal realization
given as $s = \sum_{J} s_J$, where $s_J \in S_J(x)$ are conformal to  $s$ and have at most two nonzero
elements. By exploiting this and Jensen's inequality,  we derive an upper bound for $\psi_{\mu \Gamma} (d_{\mu \Gamma}(x^m); x^m) $ as follows:
\begin{align}
   \psi_{\mu \Gamma} (d_{\mu \Gamma}(x^m); x^m) &=\displaystyle {\min_{s\in S(x^m)}}  \left[ f (x^m) + \langle \nabla f (x^m), s\rangle
   {+ \frac{1}{2} \|s\|^2_{\mu \Gamma}} \right ] \\
   &= \displaystyle {\min_{s\in S(x^m)}} \left [ f (x^m) + \big\langle \nabla f (x^m), \sum_{J}s_J\big\rangle
   {+ \frac{1}{2} \bigg  \|\sum_{J}s_J\bigg\|^2_{\mu \Gamma}} \right]\\
   &=\displaystyle {\min_{\tilde{s}_J\in S_J(x^m)}} \left [ f (x^m) + \frac{1}{z} \big\langle \nabla f (x^m),
   \sum_{J}\tilde{s}_J\big \rangle {+ \frac{1}{2} \bigg  \|\frac{1}{z} \sum_{J}\tilde{s}_J\bigg\|^2_{\mu \Gamma}} \right ] \\
   & {\leq  \displaystyle \min_{\tilde{s}_J\in S_J(x^m)} }\left [ f (x^m) +\frac{1}{z} \sum_{J}\langle \nabla f (x^m), \tilde{s}_J \rangle
 {+ \frac{1}{2z}  \sum_{J} \big\| \tilde{s}_J\big\|^2_{\mu \Gamma}} \right ]\\
 &= E \left [\psi_{\mu \Gamma} (d_{J_m}(x^m); x^m)| \xi^{m-1} \right ],
\end{align}
where $\tilde{s}_J = zs_J$, see \eqref{eqn_numberZ}. 
 
Let us  summarize the previous results. In particular, bounds on $\psi_{\mu \Gamma} (d_{\mu \Gamma}(x^m); x^{m})$ are given below
 \begin{equation}
 \label{uper-lower}
  \begin{array}{rl}
    z E\left[\psi_{L_{J_m} \textbf{1}}(d_{J_m}(x^m); x^m) | \xi^{m-1}   \right] - (z-1) f(x^m) & \leq   \psi_{\mu \Gamma} (d_{\mu \Gamma}(x^m); x^{m}) \\[1ex]
    & \leq  E[\psi_{\mu \Gamma} (d_{J_m}(x^m); x^m)| \xi^{m-1}] 
  \end{array}
   \end{equation}
Recall that $d_{J_m}(x^m) =u_J^m -x_J^m\rightarrow 0$ a.s. Therefore
$E[\psi_{\mu \Gamma} (d_{J_m}(x^m); x^m)|\xi^{m-1}]$ converges to $\bar{f}$ a.s.~when $m\rightarrow\infty$.  Clearly,  sequences of
lower and upper bounds of $\psi_{\mu \Gamma} (d_{\mu \Gamma}(x^m); x^m)$  converge to $\bar{f}$ and therefore  $\psi_{\mu \Gamma} (d_{\mu \Gamma}(x^m); x^m)$ converges to $\bar{f}$ a.s.~for $m\rightarrow\infty$.

Note that the function $\psi_{\mu \Gamma} (s; x)$ is strongly convex in the variable $s$   with parameter $\mu$  w.r.t.~norm $\|\cdot\|_{\Gamma}$, see \Cref{stronglyConvex}. This follows from {$\nabla^2\psi_{\mu \Gamma} = \mu D_{\Gamma} \succ 0$}.
Therefore, $d_{\mu \Gamma}(x)$ is the unique minimizer and the following inequality holds: 
\begin{equation}\label{eq18}
   {\psi_{\mu \Gamma} (s; x)  ~{ \geq }~ \psi_{\mu \Gamma} (d_{\mu \Gamma}(x); x) {+ \nabla \psi_{\mu \Gamma}(d_{\mu \Gamma}(x);x)^\top (s - d_{\mu \Gamma}(x) )+ \frac{\mu}{2} \| s-d_{\mu \Gamma}(x)\|_{\Gamma}^2,
   ~~~~\forall x,s\in \mathbb{R}^n}},
 \end{equation}
which leads to
\begin{align}
   \psi_{\mu \Gamma} (0; x^m)   &{\geq} \psi_{\mu \Gamma} (d_{\mu \Gamma}(x^m); x^m) {- \nabla \psi_{\mu \Gamma}(d_{\mu \Gamma}(x^m); x^m )^\top d_{\mu \Gamma}(x^m) 
   + \frac{\mu }{2} \| d_{\mu \Gamma}(x^m) \|_{\Gamma}^2} \\
& {\geq \psi_{\mu \Gamma} (d_{\mu \Gamma}(x^m); x^m) + \frac{\mu}{2} \| d_{\mu \Gamma}(x^m) \|_{\Gamma}^2}, \label{zeroAlmostsure}
\end{align}
where the last inequality follows from {$\nabla \psi_{\mu \Gamma}(d_{\mu \Gamma}(x^m); x^m)^\top d_{\mu \Gamma}(x^m) \leq 0$}, see \Cref{lem_inProdIneq}. Since $ \psi_{\mu \Gamma} (0; x^m)= f(x^m)$, it follows that $\psi_{\mu \Gamma} (0; x^m) $ converges to $\bar{f}$ a.s.~when $m\rightarrow\infty$.
Thus, both sequences
$\psi_{\mu \Gamma} (0; x^m) $ and $\psi_{\mu \Gamma} (d_{\mu \Gamma}(x^m); x^m)$, converge to $\bar{f}$ a.s. for $m\rightarrow\infty$.

From the previous discussion and \eqref{zeroAlmostsure} it follows that
\begin{align}
{\bar{f}  \geq \bar{f}
   + \frac{\mu }{2} \| d_{\mu \Gamma}(x^m) \|_{\Gamma}^2 ~~~\mbox{a.s.}},
\end{align}
which results in 
  $ \| d_{\mu \Gamma}(x^m ) \|_{\Gamma}^2 \leq 0,$
from where it follows that  the sequence {$M_q(x^m , \Gamma) = \|d_{\mu \Gamma}(x^m )\|_{\Gamma}$} converges to 0 a.s. when $m \to \infty$.
\end{proof}
We use the result of \Cref{ThmM_congergence} to prove the following theorem.
\begin{thm}\label{thm:accumulation point}
Assume that  the  objective function $f(x)$ of problem~\eqref{general} satisfies \Cref{assump2}, and let the sequence $\{x^{m}\}$ be generated by \Cref{Alg1}  using the uniform distribution.
Then  any accumulation point of the sequence $\{x^m\}$ is a stationary point for~\eqref{general}.
\end{thm}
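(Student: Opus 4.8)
The plan is to combine the almost-sure statement $M_q(x^m,\Gamma)\to 0$ from \Cref{ThmM_congergence} with a continuity argument for the map $x\mapsto d_{\mu\Gamma}(x)$. First I would record that the feasible set $P$ in \eqref{feasibleP} is compact (a closed polyhedron contained in the box $[e,g]$), so every accumulation point $x^*$ of $\{x^m\}$ lies in $P$ and is the limit of some subsequence $\{x^{m_k}\}$. On the almost-sure event on which $M_q(x^m,\Gamma)=\|d_{\mu\Gamma}(x^m)\|_\Gamma\to 0$, we then have in particular $d_{\mu\Gamma}(x^{m_k})\to 0$.

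The core step is to show that this forces $d_{\mu\Gamma}(x^*)=0$, equivalently $M_q(x^*,\Gamma)=0$; once this is done, \Cref{lem3.1} immediately yields that $x^*$ is a stationary point of \eqref{general}. To this end I would use the variational characterization \eqref{definition-of-psi}--\eqref{definition-of-maximizer-for-psi}: $d_{\mu\Gamma}(x)$ minimizes $\psi_{\mu\Gamma}(\cdot\,;x)$ over $S(x)$, see \eqref{setS}, where $\psi_{\mu\Gamma}(s;x)$ is jointly continuous in $(s,x)$ and $\mu$-strongly convex in $s$ with respect to $\|\cdot\|_\Gamma$. For an arbitrary $s\in S(x^*)$ I would construct feasible perturbations $s_k\in S(x^{m_k})$ with $s_k\to s$; optimality of $d_{\mu\Gamma}(x^{m_k})$ over $S(x^{m_k})$ gives $\psi_{\mu\Gamma}(d_{\mu\Gamma}(x^{m_k});x^{m_k})\le\psi_{\mu\Gamma}(s_k;x^{m_k})$, and letting $k\to\infty$ (using $d_{\mu\Gamma}(x^{m_k})\to 0$, $x^{m_k}\to x^*$ and joint continuity) yields $f(x^*)=\psi_{\mu\Gamma}(0;x^*)\le\psi_{\mu\Gamma}(s;x^*)$. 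Since $0\in S(x^*)$, this means $0$ minimizes $\psi_{\mu\Gamma}(\cdot\,;x^*)$ over $S(x^*)$, and by uniqueness of the minimizer $d_{\mu\Gamma}(x^*)=0$.

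The technical heart, and the step I expect to be the main obstacle, is the construction of the $s_k$, i.e.\ lower hemicontinuity of the correspondence $x\mapsto S(x)$ on $P$. I would derive it from a polyhedral error bound: writing $S(x)=\{s:Bs\le c(x)\}$ with $B$ fixed (rows $\pm a^\top$ and $\pm I$) and $c(x)$ affine in $x$, Hoffman's lemma supplies a constant $\theta$ with $\mathrm{dist}(s,S(x_k))\le\theta\,\|(Bs-c(x_k))_+\|$; for $s\in S(x^*)$ the residual $(Bs-c(x_k))_+$ is bounded componentwise by $|x^*-x_k|$, so the Euclidean projection $s_k$ of $s$ onto the nonempty set $S(x_k)$ (it contains $0$) satisfies $s_k\to s$. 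Upper hemicontinuity and compact-valuedness of $S(\cdot)$ are immediate, so one could alternatively phrase the whole continuity claim through Berge's maximum theorem together with uniqueness of the minimizer, concluding that $x\mapsto d_{\mu\Gamma}(x)$, hence $x\mapsto M_q(x,\Gamma)$, is continuous on $P$, whence $M_q(x^*,\Gamma)=\lim_k M_q(x^{m_k},\Gamma)=0$.

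A second, essentially equivalent route would pass to the limit directly in the KKT system \eqref{optimality-conditions-of-maximizer-for-psi}--\eqref{psi_NecessAndSuff} written for $d_{\mu\Gamma}(x^{m_k})$: substituting $d_{\mu\Gamma}(x^{m_k})\to 0$ and $x^{m_k}\to x^*$ and extracting convergent multiplier sequences $(\lambda_k,\gamma_k,\delta_k)\to(\lambda^*,\gamma^*,\delta^*)$ recovers \eqref{Necessary_Optimality_Condition}--\eqref{Necessary_Optimality_Condition-2} for $x^*$. With this route the obstacle simply moves to proving boundedness of $(\lambda_k,\gamma_k,\delta_k)$, which again rests on the polyhedral structure of $S(\cdot)$; I would therefore favour the continuity argument above as the cleaner path.
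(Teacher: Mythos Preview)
Your proposal is correct and follows essentially the same route as the paper: invoke \Cref{ThmM_congergence} to get $d_{\mu\Gamma}(x^{m_k})\to 0$ along the relevant subsequence, pass to the limit in the variational inequality $\psi_{\mu\Gamma}(d_{\mu\Gamma}(x^{m_k});x^{m_k})\le\psi_{\mu\Gamma}(s;x^{m_k})$, conclude that $0$ is the (unique) minimizer of $\psi_{\mu\Gamma}(\cdot\,;x^*)$ over $S(x^*)$, and finish with \Cref{lem3.1}. Your treatment is in fact more careful than the paper's on two points: the paper simply assumes the whole sequence converges rather than passing to a subsequence, and---more substantively---it writes the inequality ``for all $s\in S(x^m)$'' and, after taking limits, ``for all $s\in S(\bar x)$'' without explaining why an arbitrary $s\in S(\bar x)$ can be approached by feasible $s_k\in S(x^{m_k})$. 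Your Hoffman-bound construction (or the Berge alternative) is precisely the lower-hemicontinuity argument that fills this gap.
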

\begin{proof}
Assume that the entire sequence $\{x^{m}\}$  is convergent, and let $\overline{x}$ be the limit point of this sequence. Using the fact that $d_{\mu \Gamma}(x^{m})$, see~\eqref{definition-of-maximizer-for-psi}, is the minimizer of $\psi_{\mu \Gamma}(\cdot~; x^m)$, see~\eqref{definition-of-psi}, we have 
 \begin{align}
 \psi_{\mu\Gamma}(d_{\mu\Gamma}(x^m);x^m)     &=
  f(x^m) + \left\langle \nabla f(x^m) ,d_{\mu \Gamma}(x^{m})  \right\rangle
  {+ \frac{\mu}{2} \| d_{\mu \Gamma}(x^{m})\|_{\Gamma}^2  \leq } f(x^m) +  \left\langle \nabla f(x^m) ,s  \right\rangle {+ \frac{\mu}{2} \|s\|_{\Gamma}^2}  
 \end{align}
for all $s\in S(x^m)$. Taking the limit $m\rightarrow \infty$, we obtain
\begin{eqnarray}
  f(\overline{x}) + \left\langle \nabla f(\overline{x}) ,d_{\mu\Gamma}(\overline{x})  \right\rangle
  {+ \frac{\mu }{2} \| d_{\mu\Gamma}(\overline{x})
  \|_{\Gamma}^2      
     \leq} f(\overline{x}) +
  \left\langle \nabla f(\overline{x}) ,s  \right\rangle {+ \frac{\mu }{2} \|s\|_{\Gamma}^2},
\end{eqnarray}
for all $s\in S(\overline{x})$.
Thus, {$\psi_{\mu \Gamma} (d_{\mu \Gamma}(\overline{x});\overline{x}) \leq \psi_{\mu \Gamma} (s;\overline{x})$} for all $s\in S(\overline{x})$.
This inequality  accompanied with  the a.s.~convergence of $\{ d_{\mu \Gamma}({x}^m) \}$ to zero (see \Cref{ThmM_congergence}) implies that
$d_{\mu \Gamma}(\overline{x}) =0$ is the {minimum} of   \eqref{definition-of-psi} for $x= \overline{x}$  and thus $M_q (\overline{x}, \Gamma)=0$.
 From \Cref{lem3.1},  it follows that $\overline{x}$ is  a stationary point of non-convex optimization problem~\eqref{general}.
\end{proof}
The following theorem provides the convergence rate for \Cref{Alg1}.
\begin{thm}\label{thm:convergenceRate}
 Let the  objective function $f(x)$ of problem~\eqref{general} satisfy \Cref{assump2}.
 Then, \Cref{Alg1}, based on the uniform distribution, generates a sequence $\{x^m\}$ satisfying the following convergence rate for the expected values of the { stationarity measure}:
\begin{equation}\label{rate}
  \min_{0\leq k \leq m} E\left[  (M_q(x^k, \Gamma))^2 \right] \leq \frac{2n {\left ( f(x^0)-f^* \right )}}{q(m+1)},~~~ \forall m\geq 0,
\end{equation}
where $f^*$ is the optimal value for non-convex optimization problem~\eqref{general}, $M_q(x, \Gamma)$ is given in  \eqref{optimality_measure}, and
$\Gamma$  in \eqref{definition-of-gamma_i}.
\end{thm}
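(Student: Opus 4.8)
The plan is to convert the almost-sure argument behind \Cref{ThmM_congergence} into a quantitative statement by tracking the \emph{expected one-step decrease} of $f$ in terms of the stationarity measure. Concretely, I would first prove the one-step inequality
\[
E\!\left[ f(x^{m+1}) \mid \xi^{m-1} \right] \;\leq\; f(x^m) \;-\; \frac{q}{2n}\,\big( M_q(x^m, \Gamma) \big)^2 ,
\]
and then telescope it. To get the one-step inequality I assemble three facts already available in the excerpt. (i) Applying \eqref{identical-final-result-of-q-coordinate} with $s_J = d_{J_m}(x^m)$, and noting that $x^{m+1} = x^m + d_{J_m}(x^m)$ together with $\|\cdot\|_{\mathbf 1} = \|\cdot\|$, gives the pathwise bound $f(x^{m+1}) \leq \psi_{L_{J_m}\mathbf 1}(d_{J_m}(x^m); x^m)$ for every realization of $J_m$ (here $d_{J_m}(x^m)$ is exactly the minimizer of $\psi_{L_{J_m}\mathbf 1}(\cdot\,;x^m)$ over $S_{J_m}(x^m)$, which is \eqref{update_subproblem} written in the shifted variable $s=\bar u - x^m_J$). (ii) Taking conditional expectation and invoking \Cref{Lemma_expected_value} yields
\[
E\!\left[ f(x^{m+1}) \mid \xi^{m-1} \right] \;\leq\; \Big(1-\tfrac1z\Big) f(x^m) + \tfrac1z\, \psi_{\mu\Gamma}\big(d_{\mu\Gamma}(x^m); x^m\big).
\]
(iii) The chain \eqref{zeroAlmostsure} from the proof of \Cref{ThmM_congergence}, namely $f(x^m) = \psi_{\mu\Gamma}(0;x^m) \geq \psi_{\mu\Gamma}(d_{\mu\Gamma}(x^m);x^m) + \tfrac{\mu}{2}\|d_{\mu\Gamma}(x^m)\|_\Gamma^2$ (which in turn rests on \Cref{lem_inProdIneq} and strong convexity of $\psi_{\mu\Gamma}$), rearranges to $\psi_{\mu\Gamma}(d_{\mu\Gamma}(x^m);x^m) \leq f(x^m) - \tfrac{\mu}{2}\big(M_q(x^m,\Gamma)\big)^2$. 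Substituting (iii) into (ii) and using the combinatorial identity $\mu/z = \binom{n-1}{q-1}/\binom{n}{q} = q/n$ produces the displayed one-step inequality.

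Next I would pass to total expectations using the tower property (note $f(x^m)$ and $M_q(x^m,\Gamma)$ are $\xi^{m-1}$-measurable, while $x^{m+1}$ depends on $\xi^m$), rearrange to
\[
\frac{q}{2n}\, E\!\left[ \big(M_q(x^m,\Gamma)\big)^2 \right] \;\leq\; E[f(x^m)] - E[f(x^{m+1})],
\]
and sum this over $k = 0,1,\dots,m$. The right-hand side telescopes, and since $x^0$ is deterministic and $f(x^{m+1}) \geq f^*$ on $P$, it collapses to $f(x^0) - f^*$; hence $\sum_{k=0}^m E[(M_q(x^k,\Gamma))^2] \leq \tfrac{2n}{q}(f(x^0)-f^*)$. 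Bounding the minimum of the $m+1$ nonnegative terms by their average then gives exactly \eqref{rate}.

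I do not expect a genuine obstacle: the argument is essentially a re-packaging of ingredients already proved. The only points requiring care are (a) the measurability bookkeeping that legitimizes the tower-property step, and (b) verifying the identity $\mu/z = q/n$, which is precisely what turns the $\binom{n}{q}$-type constants into the clean factor $2n/q$ in the stated rate; this is also the place that makes transparent that larger $q$ improves the bound.
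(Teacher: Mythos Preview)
Your proposal is correct and follows essentially the same route as the paper's own proof: bound $f(x^{m+1})$ by $\psi_{L_{J_m}\mathbf 1}(d_{J_m}(x^m);x^m)$ via \eqref{identical-final-result-of-q-coordinate}, apply \Cref{Lemma_expected_value} after conditioning, use the strong-convexity inequality \eqref{zeroAlmostsure} together with $\mu/z=q/n$ to obtain the one-step expected decrease, and telescope. The measurability and combinatorial points you flag as requiring care are indeed the only details that need attention, and you handle them correctly.
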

\begin{proof}
For the sake of brevity, we omit $m$ from the notations. Therefore,  the current  point is denoted by  $x$, $L_{J_m}$ is replaced by $L_{J}$, and    $\xi^{ m-1}$ by $\xi$. 

Considering \eqref{identical-final-result-of-q-coordinate} for $d_J$ yields:
 \begin{align}
  f(x+d_J) {\leq f(x) + \langle \nabla_J f(x), d_J  \rangle  +  \frac{L_J}{2} \|d_J\|^2.}
\end{align}
Let $x^+:=x+d_J$ and 
take expectation conditioned on $\xi$ for the above inequality:
\begin{align}
  E\big[f(x^+)|\xi\big] \leq  E\Big[f(x) + \langle \nabla_J f(x), d_J  \rangle  {+  \frac{L_J}{2} \|d_J \|^2 } |\xi\Big] = {E\Big[\psi_{L_J \textbf{1}}(d_{J}(x); x) |\xi \Big]}.
  \end{align}
Combining the previous inequality and \Cref{Lemma_expected_value},  we obtain
\begin{equation}\label{eq20}
  E\left[  f(x^+) | \xi   \right] {\leq} \left(  1-\frac{1}{z} \right) f(x) + \frac{1}{z} \psi_{\mu \Gamma}(d_{\mu \Gamma}(x); x),
\end{equation}
from where it follows:
\begin{equation}\label{eq20-2}
   E\left[  f(x^+) | \xi   \right]  \leq \left(  1-\frac{1}{z} \right)  E\left[  f(x) | \xi   \right]  + \frac{1}{z} E\left[ \psi_{\mu \Gamma}(d_{\mu \Gamma}(x); x)|\xi \right].
  \end{equation}
{Thus, 
\begin{align}
E\left[  f(x) | \xi   \right] - E\left[  f(x^+) | \xi   \right] \, \, &\geq \, \, \frac{1}{z} \Big(E\left[  f(x) | \xi   \right] -E[\psi_{\mu \Gamma}(d_{\mu \Gamma}(x); x)|\xi ] \Big)  \\
   &= \, \, \frac{1}{z} \Big(E[ \psi_{\mu \Gamma}(0; x)|\xi  ] - E[\psi_{\mu \Gamma}(d_{\mu \Gamma}(x); x)|\xi ]\Big)\\
      &\geq \, \,  \frac{\mu}{2z} E \Big[  \big\|d_{\mu \Gamma}(x) \big\|_{\Gamma}^2 \Big] = \frac{q}{2n} E \Big[  \big\|d_{\mu \Gamma}(x) \big\|_{\Gamma}^2 \Big].
   \end{align}}
The last inequality is an implication of $\mu$-strongly convexity of {$\psi_{\mu \Gamma}(d_{\mu \Gamma}(x); x)$}, see \eqref{zeroAlmostsure}.
Getting back to the notation dependent  on iterations $m$ and 
    summing up w.r.t.~entire history, we obtain:
   \begin{equation}
      \frac{q}{2n}
      \sum_{k=0}^{m} E\big[ (M_q(x^k, \Gamma))^2\big]\leq   {f(x^0)-f^*},
   \end{equation}
 from where the theorem follows.
\end{proof}
\begin{rem}
\Cref{thm:convergenceRate} shows that for  $q=2$, the convergence rate of \Cref{Alg1} coincides  with the convergence of Algorithm 2-RCD \cite[Theorem 5]{patrascu2015efficient}. However, for $q>2 $ our algorithm converges faster than the algorithm from~\cite{patrascu2015efficient}. Note that the algorithm from \cite{patrascu2015efficient} allows for choosing blocks of coupled variables. For this comparison, we assume that these blocks only contain single variables.
\end{rem}

\section{Numerical experiments}
\label{sect:numerics}
In this section we present numerical results for solving  the densest $k$-subgraph  problem  and the eigenvalue complementarity   problem. Numerical results are performed on an \mbox{Intel i7-1165G7} 2.80GHz processor with 4 cores, and 16GB RAM.

{
In the following numerical experiments, we verify whether the computed points $x^*$ are stationary. Theoretically, by using \Cref{lem3.1}, this can be done by checking whether $M_q(x^*,\Gamma) = \| d_{\mu \Gamma}(x^*) \|= 0$. However, in practice, $n$ can be large, which prohibits the computation of $\mu = \binom{n-1}{q-1}$, and consequently, the computation of $M_q(x^*,\Gamma)$. We therefore consider another, related value.

A point $x^* \in P$, see \eqref{feasibleP}, is a stationary point of $f$ if $\langle \nabla f(x^*), x-x^* \rangle \geq 0$ for all $x \in P$ (see e.g., \cite[Proposition 2.1.2]{bertsekas1997nonlinear}). We define the function $\widetilde{M} : P \to \mathbb{R}_{+}$, as
\begin{align}
    \label{eqn_wideTildeMDef}
    \widetilde{M}(x^*) := - \min_{x \in P} \quad  \langle \nabla f(x^*), x-x^* \rangle
\end{align}
Note that $\widetilde{M}$ can be computed by solving a linear programme on $n$ variables. Moreover, $\widetilde{M}(x^*) = 0$ implies that $x^*$ is stationary.}

One can download our codes from the following link:
\begin{center}
{\url{https://github.com/LMSinjorgo/q-RCCD_algorithm}}
\end{center}

\subsection{The densest \texorpdfstring{$k$}{k}-subgraph problem }
\label{Section_K_densest_subgraph}
Let $G$ be an undirected graph with $n$ vertices and {$k \in \mathbb{N}$} a given number such that  $3\leq k \leq n-2$. The \textit{densest $k$-subgraph problem} is the problem of finding a subgraph of $G$ with  $k$ vertices and the maximum number of edges. 
The D$k$S problem is known to be NP-hard. The problem plays a role in analyzing web graphs and social networks, but also in computational biology and cryptography. There does not exist a polynomial time approximation scheme for the D$k$S problem in general graphs~\cite{KhotSubhash}. Exact approaches for solving the D$k$S problem report solutions for instances only up to 160 vertices, see e.g.,~\cite{KrisMalRoup:16}. 
Thus, obtaining good bounds for the problem  is a crucial step for solving large scale instances. 

The D$k$S problem can be formulated as follows:
\begin{equation}\label{binary-nonconvex}
\max\left \{x^\top Ax ~:~ \sum_{i=1}^{n}x_i=k, ~x_i \in \{0,1\}, ~i\in [n] \right \},
  \end{equation}
 where $A$ is the adjacency matrix of $G$. We consider here the continuous relaxation of the D$k$S problem, i.e., 
\begin{equation}\label{relaxed_problem}
\max\left \{x^\top Ax ~:~ \sum_{i=1}^{n}x_i=k, ~0\leq x_i\leq 1, ~i\in [n] \right \},
\end{equation}
and use the \qRCCD{} algorithm to compute {stationary points} for various instances.

The next lemma shows that, in case of the D$k$S problem, one can easily compute a Lipschitz constant in \eqref{q-coordinate}.
\begin{lem}\label{lem2.2}
  Let $f(x)= x^\top Ax$, where $A$ is the adjacency matrix of a graph. Then, the Lipschitz condition \eqref{q-coordinate} is satisfied for $L_J = 2\|A_J\|$, where $A_J$ is the $|J| \times |J|$ principal submatrix of $A$, with columns and rows indexed by $J.$
\end{lem}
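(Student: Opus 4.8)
The plan is to verify the $q$-coordinate Lipschitz condition \eqref{q-coordinate} directly for $f(x) = x^\top A x$. First I would compute the gradient: $\nabla f(x) = (A + A^\top) x = 2Ax$, since $A$ is symmetric (it is an adjacency matrix). Then for a fixed index set $J$ with $|J| = q$ and an arbitrary vector $s_J \in \mathbb{R}^n$ supported on $J$, I need to bound $\| \nabla_J f(x + s_J) - \nabla_J f(x) \|$. Here $\nabla_J f(y)$ is the vector obtained from $\nabla f(y) = 2Ay$ by keeping the entries indexed by $J$ and zeroing the rest.

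The key observation is linearity of the gradient: $\nabla_J f(x + s_J) - \nabla_J f(x) = 2 \big( A(x+s_J) \big)_J - 2 (Ax)_J = 2 (A s_J)_J$, where $(\cdot)_J$ denotes the restriction to coordinates in $J$. Since $s_J$ is supported on $J$, the product $(A s_J)_J$ only involves the principal submatrix $A_J$: identifying $s_J$ with its nonzero part $\bar s \in \mathbb{R}^q$, we get $(A s_J)_J = A_J \bar s$. Hence $\| \nabla_J f(x+s_J) - \nabla_J f(x) \| = 2 \| A_J \bar s \| \leq 2 \| A_J \| \, \| \bar s \| = 2 \| A_J \| \, \| s_J \|$, where $\| A_J \|$ is the spectral (operator) norm of $A_J$. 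This shows the Lipschitz condition holds with $L_J = 2 \| A_J \|$.

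One small point worth addressing: the D$k$S problem as stated in \eqref{binary-nonconvex}--\eqref{relaxed_problem} is a maximization problem, whereas the general framework \eqref{general} and \Cref{assump2} are phrased for minimization of $f$. The standard reduction is to apply the algorithm to $-f(x) = -x^\top A x$, and since the Hessian changes only by a sign, the Lipschitz bound on the gradient is unchanged; alternatively one simply notes that \eqref{q-coordinate} is a statement about $\nabla f$ up to sign and the same $L_J$ works for $f$ and $-f$. I would mention this briefly but not belabor it.

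I do not anticipate a genuine obstacle here — the argument is essentially the observation that a quadratic has an affine gradient, so the ``Lipschitz constant'' of the restricted gradient is exactly the operator norm of the relevant principal submatrix, times $2$ for the symmetric quadratic form. The only thing requiring a line of care is making the identification between $s_J \in \mathbb{R}^n$ (supported on $J$) and its compressed version $\bar s \in \mathbb{R}^q$ precise, and correspondingly between the $J$-restricted matrix-vector product and multiplication by the principal submatrix $A_J$; this is exactly the abuse of notation already sanctioned in \Cref{assump2}, so it should be clean.
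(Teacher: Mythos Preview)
Your proposal is correct and matches the paper's proof essentially line for line: both use linearity of $\nabla f$ (since $\nabla f(x)=2Ax$) to reduce the Lipschitz bound to $\|\nabla_J f(s_J)\| = 2\|A_J \bar s\| \le 2\|A_J\|\,\|s_J\|$. Your extra remarks on the $s_J\leftrightarrow\bar s$ identification and the max/min sign issue are fine but go slightly beyond what the paper spells out.
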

\begin{proof} 
Let $J \subseteq [n]$, and $\nabla_J f$ and $s_J$ be defined as in \eqref{q-coordinate}. For any $J \subseteq [n]$, we have $\nabla_J f(x) =(2Ax)_J$, and so $\nabla_J f$ is linear. Thus,
  \begin{equation}
  \label{eq21}
  \|\nabla_J f(x+s_J) - \nabla_J f(x)  \| = \| \nabla_J f(s_J) \|  {\leq  2\|A_J\| \, \|s_J\|}.
   \vspace{-1.2em}
\end{equation}
\end{proof}

\begin{rem}
    Because of norm equivalency in $\mathbb{R}^n$, it is not important to use a particular norm $\| A_J \|_p$ in the calculation. Since $A$ is the adjacency matrix of a simple graph, $A_J$ is symmetric which implies that $\|A_J\|_{\infty}= \|A_J\|_{1}  = \triangle$, where $\triangle$ denotes the largest degree in the subgraph. We note that, for large subgraphs, computing $\triangle$ is significantly cheaper than computing $\| A_J \|_2$.
\end{rem}
To compute projections onto the feasible set of \eqref{relaxed_problem}, see \eqref{eqn_closedFormUpdate}, we use the algorithm proposed in \cite{wang2015projection}, which runs in $\mathcal{O}(q^2)$. 

We first test the performance  of the \qRCCD{} algorithm on  the Erd\"os-R\'enyi graphs and  the Erd\"os-R\'enyi graphs with  planted subgraphs.  Then, we test our algorithm on graphs from the literature. In the Erd\"os-R\'enyi graph $G_{p}(n)$ with $n$ vertices, each edge is generated independently of other edges with probability $p\in ( 0,1 ]$. The graph obtained by planting a complete subgraph with $m$ vertices in $G_{p}(n)$  is known as the Erd\"os-R\'enyi graph with a planted subgraph and denoted by $P^m_p(n)$. 

In 
\Cref{table_ERsimpleBounds}, for graph $G_{0.5}(2048)$ and $k = 30$, we show the performance of the \qRCCD{} algorithm, for $q \in \{2, 50, 100, 200, 500, 750 \}$ and $500, 1000, 5000, 10000$ iterations. We report the obtained lower bound on \eqref{relaxed_problem} and the required computation time in seconds, { averaged over 30 different runs of the \qRCCD{} algorithm}.
As starting vector, we take $x=(k/n)\mathbf{1}_n$. Observe that the bounds are stronger for higher values of $q$, at the expense of greater computation times. Furthermore, smaller values of $q$ require more iterations to obtain similar bounds obtained by higher values of $q$.
{ For example, for $q=100$ and $10^4$ iterations the objective value is $724.411$, and for  $q=200$ and only $5 \cdot 10^3$ iterations the objective value is  $724.874$.  
Thus, the two cases lead to similar bounds while the number of iterations times $q$ is the same, at $10^6.$}

In {\Cref{table_plantedSG}}, we test the \qRCCD{} algorithm for graph $P^{100}_{0.3}(4096)$. For each $q \in \{ 200, 300, 400,$ $500 \}$, we run the \qRCCD{} algorithm for {$750$ and  $1000$ iterations for the $P^{100}_{0.3}(4096)$} graph. Note that $P^{100}_{0.3}(4096)$ attains an optimal objective value of 9900 with high probability. In the table, we report the minimum, median, mean and maximum determined objective values 
{ averaged over 100 different runs of the \qRCCD{} algorithm}.
{We also report the average value of $\widetilde{M}(x)$, see \eqref{eqn_wideTildeMDef}, using the $x$ computed at the final iteration, and the average time (in seconds) for each $q$ and two  iteration numbers.} Observe that with relatively little computational effort, we are able to determine near optimal bounds for $q = 500$ in $750$ iterations. {Moreover, for $q = 500$ and $1000$ iterations, the \qRCCD{} algorithm attains the optimal value of 9900 in every run, while the average value of the stationarity measure $\widetilde{M}(x)$ is $3.1$E$-06$.}

{
In \Cref{bigTable}  we compare the \qRCCD{} algorithm with three different numerical methods for solving the D$k$S problem on 12 graphs.  In this table we set $k = 200$. The methods we compare are: the ADMM, the PGM, and a version of the 2-Random Coordinate Descent (2-RCD) from \cite{patrascu2015efficient}.
The ADMM for the D$k$S problem is proposed
 by~\citeauthor{bombina2020convex}~\cite{bombina2020convex}, and we use the code provided by the authors\footnote{Code available at \url{https://github.com/pbombina/admmDSM}.}. 
The PGM is the deterministic version of 
 the \qRCCD{} algorithm, i.e., when $q = n$. The 2-RCD algorithm, with parameter ${\mathcal{B}} \in \mathbb{N}$, implemented here differs from the \qRCCD{} algorithm 
  in only  the variable selection method: given the parameter ${\mathcal{B}} \in \mathbb{N}$, we divide the $n$ variables in blocks, each of size ${\mathcal{B}}$\footnote{In case ${\mathcal{B}}$ does not divide $n$, we set ${\mathcal{B}}$ as the closest divisor of $n$.}. In every iteration, two blocks are picked, uniformly at random, and the 2$B$ variables in these blocks are updated the same way as for the \qRCCD{} algorithm. For the \qRCCD{} algorithm  and graphs with edge density of approximate order 1E$-01$, we set $q = 100$. For all other graphs we set $q = 1500$. For the 2-RCD algorithm, we chose ${\mathcal{B}} = 10$ for every graph because the preliminary results show that for this ${\mathcal{B}}$ the algorithm provides the highest objective values.

Each method runs for a set time (10, 60 or 120 seconds). We report the objective value, denoted \textbf{Obj.} We also display a lower bound on the optimal value of the D$k$S problem, denoted LB, which is obtained as follows: given the nonbinary $x \in \mathbb{R}^n$ returned by one of the four algorithms, we set the $k$ highest values in $x$ to 1, and the rest to 0. This produces a binary vector feasible to \eqref{binary-nonconvex}, and we use this feasible vector to compute a lower bound LB. Since ADMM returns a matrix $X \in \mathbb{R}^{n \times n}$, we perform the same operation on the $n$-dimensional diagonal of $X$. Additionally, for all methods other than ADMM, we report the value of $\widetilde{M}(x)$, see \eqref{eqn_wideTildeMDef}, using the nonbinary $x$ obtained at the final iteration\footnote{Due to numerical imprecision, $\widetilde{M}$ is sometimes computed as a negative number.}. For the \qRCCD{} and 2-RCD algorithms, the reported values $\textbf{Obj.}$, LB and $\widetilde{M}$ are averaged over three runs.

The ADMM algorithm requires computing a singular value decomposition (SVD) at each iteration. In \Cref{bigTable}, entries for which the SVD led to a memory error, or for which the SVD could not be computed in the allotted maximum time, are marked with *.

Seven of the 12 tested graphs in \Cref{bigTable}  are taken from \cite{snapnets,rossi2015network}, and more information on those graphs can be found in these references. For these seven graphs, we removed isolated vertices and self-loops. Among the other five graphs are three  Erd\"os-R\'enyi graphs and two Erd\"os-R\'enyi graphs with  planted subgraphs. For each graph, the number of vertices, edges, and edge-densities, are given by $|V|$, $|E|$ and $D := |E| / \binom{|V|}{2}$ respectively. 

\Cref{bigTable} demonstrates the superior performance of the \qRCCD{} algorithm for solving the D$k$S problem compared to the other algorithms. The \qRCCD{} algorithm provides the best objective values and lower bounds for most graphs. The very small values of $\widetilde{M}$, which are obtained for all non-Erd\"os-R\'enyi graphs, indicate that the \qRCCD{} algorithm converges to a stationary point. 
For the Erd\"os-R\'enyi graphs (with possible planted subgraphs), a clear evidence of stationarity might be obtained by running the \qRCCD{} algorithm for more than two minutes.}

\begin{table}[]
\centering
\caption{ {Results averaged over 30 runs on the graph $G_{0.5}(2048)$ ($k = 30$)}}
\label{table_ERsimpleBounds}
\centering
\begin{tabular}{l|llllll|c}
\hline
$q$ & 2 & 50 & 100 & 200 & 500 & 750 & \multicolumn{1}{l}{\textbf{Iter.}} \\ \hline
\textbf{Obj.} & 452.413 & 630.422 & 641.349 & 654.440 & 662.429 & 669.300 & \multirow{2}{*}{$\frac{1}{2}\cdot 10^3$} \\
Time (s) & 0.028 & 0.070 & 0.136 & 0.292 & 0.845 & 1.598 &  \\ \hline
\textbf{Obj.} & 454.781 & 676.842 & 689.236 & 700.214 & 704.653 & 710.191 & \multirow{2}{*}{$10^3$} \\
Time (s) & 0.055 & 0.117 & 0.215 & 0.475 & 1.434 & 2.836 &  \\ \hline
\textbf{Obj.} & 464.721 & 713.992 & 720.316 & 724.874 & 728.748 & 728.534 & \multirow{2}{*}{$5 \cdot 10^3$} \\
Time (s) & 0.112 & 0.299 & 0.635 & 1.605 & 5.749 & 12.093 &  \\ \hline
\textbf{Obj.} & 469.149 & 721.579 & 724.411 & 729.379 & 732.162 & 731.733 & \multirow{2}{*}{$10^4$} \\
Time (s) & 0.162 & 0.494 & 1.106 & 2.941 & 10.913 & 23.371 &  \\ \hline
\end{tabular}

\end{table}

\begin{table}[]
\centering
\caption{Results averaged over 100 runs on the graph $P^{100}_{0.3}(4096)$ ($k = 100$)}
\label{table_plantedSG}
\begin{tabular}{cl|llll|c}
\hline
\multicolumn{1}{l}{}                  & $q$           & 200      & 300      & 400      & 500      & \multicolumn{1}{l}{\textbf{Iter.}} \\ \hline
\multirow{4}{*}{\textbf{Obj.}}        & Min.          & 9849.015 & 9886.895 & 9890.530 & 9899.955 & \multirow{6}{*}{750}               \\
                                      & Median        & 9888.472 & 9899.841 & 9899.998 & 9900.000 &                                    \\
                                      & Mean          & 9884.431 & 9898.664 & 9899.803 & 9899.999 &                                    \\
                                      & Max.          & 9899.932 & 9899.994 & 9900.000 & 9900.000 &                                    \\ \cline{1-6}
\multicolumn{2}{l|}{\textbf{Avg. }   $\widetilde{M}$} & 1.6E+01 & 1.3E+00 & 2.0E$-$01 & 
7.1E$-$04 &   \Tstrut                                 \\
\multicolumn{2}{l|}{\textbf{Avg. T. (s)}}             & 1.119    & 1.902    & 2.895    & 4.087    &                                    \\ \hline
\multirow{4}{*}{\textbf{Obj.}}        & Min.          & 9853.434 & 9889.105 & 9892.962 & 9900.000 & \multirow{6}{*}{1000}              \\
                                      & Median        & 9896.287 & 9899.971 & 9900.000 & 9900.000 &                                    \\
                                      & Mean          & 9892.447 & 9899.697 & 9899.927 & 9900.000 &                                    \\
                                      & Max.          & 9899.963 & 9900.000 & 9900.000 & 9900.000 &                                    \\ \cline{1-6}
\multicolumn{2}{l|}{\textbf{Avg. }   $\widetilde{M}$} & 7.6E+00 & 3.0E$-$01 & 7.3E$-$02 & 3.1E$-$06 &       \Tstrut                             \\
\multicolumn{2}{l|}{\textbf{Avg. T. (s)}}             & 1.491    & 2.524    & 3.835    & 5.420    &                                    \\ \hline
\end{tabular}
\end{table}

\clearpage

\label{page_bigTable}
\newgeometry{left=0.4cm,bottom=2cm,top=0.1cm,right=0.4cm}

\begin{landscape}

\begin{table}[]
\footnotesize
\caption{Performance of different methods for solving the D$k$S problem ($k = 200$)}
\label{bigTable}
\centering
\begin{tabular}{llll|ll|lll|llll|lll|l} 
\hline
\multirow{2}{*}{$G$}                    & \multirow{2}{*}{$|V|$} & \multirow{2}{*}{$|E|$}    & \multirow{2}{*}{$D$}     & \multicolumn{2}{c|}{ADMM}               & \multicolumn{3}{c|}{PGM}                  & \multicolumn{4}{c|}{$q$-RCCD}                               & \multicolumn{3}{c|}{2-RCD (${\mathcal{B}}$ = 10)}                        & \multicolumn{1}{c}{\multirow{2}{*}{\textbf{T. (s)}}}  \Tstrut \\ 
\cline{5-16}
                                        &                        &                           &                          & \textbf{Obj.} & LB                      & \textbf{Obj.}   & LB    & $\widetilde{M}$ & $ q$ & \textbf{Obj.}    & LB      & $\widetilde{M}$ & \textbf{Obj.}   & LB              & $\widetilde{M}$ & \multicolumn{1}{c}{} \Tstrut                                  \\ 
\hline
\multirow{3}{*}{C1000-9}                & \multirow{3}{*}{1000}  & \multirow{3}{*}{450079}   & \multirow{3}{*}{9.0E-01} & 36023.5       & 36920                   & 37898.5         & 37896 & 7.2E-02         &      & \textbf{37911.7} & 37910.0 & 7.3E-05         & 37890.8         & 37888.7         & 7.3E-09         & 10                                                    \\
                                        &                        &                           &                          & 36055.6       & 36928                   & 37898.5         & 37896 & 5.0E-10         & 100  & \textbf{37911.7} & 37910.0 & 4.0E-09         & 37890.8         & 37888.7         & -2.4E-09        & 60                                                    \\
                                        &                        &                           &                          & 36062.1       & 36926                   & 37898.5         & 37896 & 1.4E-09         &      & \textbf{37911.7} & 37910.0 & 4.8E-09         & 37890.8         & 37888.7         & -2.9E-09        & 120                                                   \\ 
\hline
\multirow{3}{*}{CA-AstroPh}             & \multirow{3}{*}{18771} & \multirow{3}{*}{198050}   & \multirow{3}{*}{1.1E-03} & \multicolumn{2}{c|}{\multirow{3}{*}{*}} & 8901.1          & 10238 & 1.8E+03         &      & \textbf{12099.5} & 12099.3 & 4.1E-01         & 11579.3         & 11586.7         & 1.0E+02         & 10                                                    \\
                                        &                        &                           &                          & \multicolumn{2}{c|}{}                   & 11835.7         & 11866 & 7.9E+01         & 1500 & \textbf{12110.0} & 12110.0 & 1.6E+00         & 11729.3         & 11730.0         & 7.6E+00         & 60                                                    \\
                                        &                        &                           &                          & \multicolumn{2}{c|}{}                   & 11925.3         & 11928 & 9.6E+00         &      & \textbf{12112.2} & 12112.0 & 1.2E-10         & 11752.5         & 11752.7         & 3.4E+00         & 120                                                   \\ 
\hline
\multirow{3}{*}{CA-CondMat}             & \multirow{3}{*}{23133} & \multirow{3}{*}{93439}    & \multirow{3}{*}{3.5E-04} & \multicolumn{2}{c|}{\multirow{3}{*}{*}} & 2854.6          & 4454  & 1.5E+03         &      & 4908.2           & 4908.0  & 1.5E-02         & 5025.2          & \textbf{5025.3} & 1.1E+01         & 10                                                    \\
                                        &                        &                           &                          & \multicolumn{2}{c|}{}                   & 4747.9          & 4770  & 6.1E+01         & 1500 & 4908.4           & 4908.0  & 6.0E-11         & 5041.3          & \textbf{5041.3} & 4.5E-02         & 60                                                    \\
                                        &                        &                           &                          & \multicolumn{2}{c|}{}                   & 4828.2          & 4836  & 2.9E+01         &      & 4908.4           & 4908.0  & 5.8E-11         & 5041.4          & \textbf{5041.3} & 1.0E-01         & 120                                                   \\ 
\hline
\multirow{3}{*}{$G_{0.75}(10000)$}      & \multirow{3}{*}{10000} & \multirow{3}{*}{37491205} & \multirow{3}{*}{7.5E-01} & \multicolumn{2}{c|}{\multirow{3}{*}{*}} & 30049.8         & 30796 & 7.8E+02         &      & \textbf{34020.0} & 34008.7 & 6.7E+01         & 33995.6         & 33993.3         & 6.9E+01         & 10                                                    \\
                                        &                        &                           &                          & \multicolumn{2}{c|}{}                   & 30484.3         & 31640 & 9.1E+02         & 100  & \textbf{34191.8} & 34188.7 & 1.2E+01         & 34152.8         & 34152.0         & 1.2E+01         & 60                                                    \\
                                        &                        &                           &                          & \multicolumn{2}{c|}{}                   & 31015.7         & 32118 & 1.1E+03         &      & \textbf{34213.0} & 34212.7 & 3.8E+00         & 34174.8         & 34175.3         & 9.4E+00         & 120                                                   \\ 
\hline
\multirow{3}{*}{$G_{0.7}(8192)$}        & \multirow{3}{*}{8192}  & \multirow{3}{*}{23485837} & \multirow{3}{*}{7.0E-01} & \multicolumn{2}{c|}{\multirow{3}{*}{*}} & 28125.1         & 28882 & 8.8E+02         &      & \textbf{32371.7} & 32364.7 & 3.1E+01         & 32250.8         & 32248.7         & 4.8E+01         & 10                                                    \\
                                        &                        &                           &                          & \multicolumn{2}{c|}{}                   & 29105.0         & 30606 & 1.2E+03         & 100  & \textbf{32505.3} & 32506.0 & 7.2E+00         & 32404.5         & 32402.0         & 1.5E+01         & 60                                                    \\
                                        &                        &                           &                          & \multicolumn{2}{c|}{}                   & 30774.6         & 31392 & 8.4E+02         &      & \textbf{32534.8} & 32536.0 & 4.8E+00         & 32450.1         & 32451.3         & 6.7E+00         & 120                                                   \\ 
\hline
\multirow{3}{*}{$G_{0.8}(8192)$}        & \multirow{3}{*}{8192}  & \multirow{3}{*}{26840496} & \multirow{3}{*}{8.0E-01} & \multicolumn{2}{c|}{\multirow{3}{*}{*}} & 32104.3         & 32764 & 7.8E+02         &      & \textbf{35659.3} & 35659.3 & 3.4E+01         & 35575.1         & 35577.3         & 5.1E+01         & 10                                                    \\
                                        &                        &                           &                          & \multicolumn{2}{c|}{}                   & 32742.4         & 33556 & 9.3E+02         & 100  & \textbf{35754.3} & 35754.7 & 6.7E+00         & 35709.1         & 35708.7         & 9.7E+00         & 60                                                    \\
                                        &                        &                           &                          & \multicolumn{2}{c|}{}                   & 33582.0         & 34386 & 9.2E+02         &      & \textbf{35778.8} & 35778.0 & 4.9E+00         & 35723.1         & 35722.7         & 4.8E+00         & 120                                                   \\ 
\hline
\multirow{3}{*}{$P_{0.5}^{100}(4096)$}  & \multirow{3}{*}{4096}  & \multirow{3}{*}{4193301}  & \multirow{3}{*}{5.0E-01} & \multicolumn{2}{c|}{\multirow{3}{*}{*}} & 27091.0         & 27390 & 4.3E+02         &      & \textbf{27858.8} & 27860.0 & 6.6E+00         & 27804.3         & 27804.0         & 8.2E+00         & 10                                                    \\
                                        &                        &                           &                          & \multicolumn{2}{c|}{}                   & 27832.4         & 27832 & 1.7E+01         & 100  & \textbf{27901.2} & 27900.7 & 4.0E-01         & 27831.5         & 27831.3         & 2.5E+00         & 60                                                    \\
                                        &                        &                           &                          & \multicolumn{2}{c|}{}                   & 27858.5         & 27860 & 3.8E+00         &      & \textbf{27901.6} & 27901.3 & 5.6E-02         & 27844.2         & 27843.3         & 1.4E+00         & 120                                                   \\ 
\hline
\multirow{3}{*}{$P_{0.35}^{100}(8192)$} & \multirow{3}{*}{8192}  & \multirow{3}{*}{11747692} & \multirow{3}{*}{3.5E-01} & \multicolumn{2}{c|}{\multirow{3}{*}{*}} & 14452.4         & 17866 & 1.4E+03         &      & \textbf{23553.5} & 23563.3 & 4.4E+01         & 23338.4         & 23356.0         & 1.2E+02         & 10                                                    \\
                                        &                        &                           &                          & \multicolumn{2}{c|}{}                   & 23023.6         & 23334 & 4.2E+02         & 100  & \textbf{23639.4} & 23639.3 & 9.7E+00         & 23588.1         & 23589.3         & 2.1E+01         & 60                                                    \\
                                        &                        &                           &                          & \multicolumn{2}{c|}{}                   & 23591.6         & 23636 & 7.4E+01         &      & \textbf{23657.1} & 23656.7 & 7.0E+00         & 23627.6         & 23628.7         & 1.2E+01         & 120                                                   \\ 
\hline
\multirow{3}{*}{Wiki-Vote}              & \multirow{3}{*}{7115}  & \multirow{3}{*}{100762}   & \multirow{3}{*}{4.0E-03} & \multicolumn{2}{c|}{\multirow{2}{*}{*}} & 14194.0         & 14260 & 1.0E+02         &      & \textbf{14533.4} & 14534.0 & 6.2E+00         & 14389.9         & 14390.7         & 1.2E+01         & 10                                                    \\
                                        &                        &                           &                          & \multicolumn{2}{c|}{}                   & 14376.1         & 14374 & 2.2E+01         & 1500  & \textbf{14570.0} & 14570.0 & 1.6E-05         & 14562.7         & 14562.0         & 2.2E+00         & 60                                                    \\
                                        &                        &                           &                          & 148.1         & 2282                    & 14412.8         & 14414 & 1.6E+01         &      & \textbf{14570.0} & 14570.0 & 5.1E-03         & 14568.9         & 14568.7         & 4.2E-01         & 120                                                   \\ 
\hline
\multirow{3}{*}{brock800-1}             & \multirow{3}{*}{800}   & \multirow{3}{*}{207505}   & \multirow{3}{*}{6.5E-01} & 26002.5       & 27622                   & 29052.8         & 29052 & 3.8E-06         &      & \textbf{29054.2} & 29054.0 & 2.5E-10         & 29048.0         & 29047.3         & 5.1E-07         & 10                                                    \\
                                        &                        &                           &                          & 26098.1       & 27796                   & 29052.8         & 29052 & 3.5E-11         & 100  & \textbf{29054.2} & 29054.0 & 1.2E-09         & 29048.0         & 29047.3         & -8.6E-10        & 60                                                    \\
                                        &                        &                           &                          & 26109.0       & 27884                   & 29052.8         & 29052 & 1.9E-10         &      & \textbf{29054.2} & 29054.0 & 4.2E-09         & 29048.0         & 29047.3         & -5.0E-10        & 120                                                   \\ 
\hline
\multirow{3}{*}{p2p-Gnutella04}         & \multirow{3}{*}{10876} & \multirow{3}{*}{39994}    & \multirow{3}{*}{6.8E-04} & \multicolumn{2}{c|}{\multirow{3}{*}{*}} & 2082.0          & 2110  & 3.3E+01         &      & \textbf{2124.7}  & 2124.7  & 2.2E-11         & 2112.3          & 2113.3          & 1.8E+00         & 10                                                    \\
                                        &                        &                           &                          & \multicolumn{2}{c|}{}                   & 2124.0          & 2124  & 6.4E-01         & 1500 & \textbf{2124.7}  & 2124.7  & 2.2E-11         & 2118.0          & 2118.0          & -2.5E-12        & 60                                                    \\
                                        &                        &                           &                          & \multicolumn{2}{c|}{}                   & \textbf{2140.7} & 2142  & 1.3E+00         &      & 2124.7           & 2124.7  & 2.2E-11         & 2118.0          & 2118.0          & -2.5E-12        & 120                                                   \\ 
\hline
\multirow{3}{*}{p2p-Gnutella09}         & \multirow{3}{*}{8114}  & \multirow{3}{*}{26013}    & \multirow{3}{*}{7.9E-04} & \multicolumn{2}{c|}{\multirow{3}{*}{*}} & 3228.9          & 3240  & 1.1E+01         &      & \textbf{3237.3}  & 3237.3  & -5.2E-12        & \textbf{3237.3} & 3237.3          & -6.7E-12        & 10                                                    \\
                                        &                        &                           &                          & \multicolumn{2}{c|}{}                   & \textbf{3240.0} & 3240  & 4.5E-12         & 1500 & 3237.3           & 3237.3  & -3.3E-12        & 3237.3          & 3237.3          & -7.0E-12        & 60                                                    \\
                                        &                        &                           &                          & \multicolumn{2}{c|}{}                   & \textbf{3240.0} & 3240  & 4.5E-12         &      & 3237.3           & 3237.3  & -3.6E-12        & 3237.3          & 3237.3          & -7.3E-12        & 120                                                   \\
\hline
\end{tabular}
\end{table}
\end{landscape}
\restoregeometry

\subsection{The eigenvalue complementarity problem}
\label{subsection_Eicp}

We study the performance of the \qRCCD{} algorithm on the \textit{eigenvalue complementarity (EiC) problem}. 
\citeauthor{patrascu2015efficient}~\cite{patrascu2015efficient} also tested their 2-random coordinate descent  algorithm on the EiC problem.  The numerical results in~\cite{patrascu2015efficient} show that the 2-RCD algorithm provides better performance in terms of objective and computational times than the efficient algorithm for the  EiC problem from~\cite{lethi:hal-01636682}. {We also compare our algorithm with PGM and the deterministic 2-coordinate descent algorithm from \cite{beck20142}.}

The EiC problem is  an extension of the classical eigenvalue problem whose non-convex logarithmic formulation is as follows (see~\cite{RaydanRosaSantos}):
\begin{equation}
\label{eqn_EiCP}
\begin{aligned}
\max_{x \in \mathbb{R}^n} \quad & f(x) = \ln{ \left( \frac{x^\top A x}{x^\top B x } \right)}\\
\textrm{s.t.} \quad & \mathbf{1}^\top x = 1, \, x \geq 0,
\end{aligned}
\end{equation}
where we assume that both $x^\top A x$ and $x^\top B x$ are positive for all feasible $x$. 
For some $J \subseteq [n]$ and any matrix $A$, we define $A_J$ as the submatrix of $A$ generated from the rows and columns of $A$ indicated by $J$. Then, as proven in  \cite[Lemma 9]{patrascu2015efficient}, function $f$ from \eqref{eqn_EiCP}, {with $A$ and $B$ elements of \eqref{eqn_feasibleMatSet}},  satisfies Lipschitz condition
\eqref{q-coordinate} for
\begin{align}
    \label{eqn_LipschitzEicp}
    L_J = 2 \left( \frac{ \| A_J \| }{x^\top A x} + \frac{ \| B_J \| }{x^\top B x} \right) \leq {2n \left( \frac{\| A_J \| }{\min_{i \in [n]} A_{ii}} + \frac{\| B_J \| }{\min_{i \in [n]} B_{ii}}\right).}
\end{align}

{In the algorithm from \cite{patrascu2015efficient}, the above upper bound on $L_J$ is taken as Lipschitz constant. In contrast, in our algorithm we take $L_J$ as 
Lipschitz constant}, and take for $\| \cdot \|$ the 1-norm (that is, the largest absolute column sum).

 It is not difficult to verify that 
\begin{align}
    \label{eqn_NableEicp}
    \nabla f(x) = 2 \Big( \frac{1}{x^\top A x}A - \frac{1}{x^\top B x}B \Big) x.
\end{align}
Note that for the update \eqref{eqn_closedFormUpdate}, we only need $\nabla_J f$, and so it is not necessary to compute all the entries of $Ax$ and $Bx$ in \eqref{eqn_NableEicp}, but only those indicated by $J$. 

{ To compute \eqref{eqn_closedFormUpdate}, we use the algorithm proposed in \cite{condat2016fast}, for projecting onto the (scaled) simplex.  This algorithm has worst case complexity  $\mathcal{O}(q^2)$. However, its complexity is observed linear in practice, see~\cite{condat2016fast}.}

We test the \qRCCD{} algorithm on the EiC problem, for symmetric matrices $A$ and $B$ from the following set:
\begin{equation}
    \label{eqn_feasibleMatSet}
    \left\{ A \in \mathbb{R}^{n \times n} : \, \,
    \begin{aligned}
        & A_{ij} = A_{ji} \geq 0, \, \, A_{ii} = 0.001 + |Z_i| \,\, \forall i,j \in [n], \\[0.5ex]
        &  Z_i \text{ standard normal random variable } \forall i \in [n]
    \end{aligned}
    \right\}
\end{equation}
Specifically, we  generate $A$ and $B$ from the set \eqref{eqn_feasibleMatSet} such that their nonzero off-diagonal entries are randomly drawn uniform variables on the range $(0,1]$. This ensures that $x^\top A x > 0$ and $x^\top B x > 0$, for all feasible $x$. Hence, $f(x)$, $L_J$ and $\nabla f (x)$, see \eqref{eqn_EiCP}, \eqref{eqn_LipschitzEicp} and \eqref{eqn_NableEicp}, are well defined for all feasible $x$. We denote by $d$ the density of matrices  (i.e., the number of nonzeros divided by $n^2$), which we vary across the numerical experiments.

In \Cref{table_qRccaEic}, we take $n = 10^5$ and $d = 10^{-4}$, and report the objective values attained by the \qRCCD{} algorithm, averaged over 50 runs, for different values of $q$ and  the number of iterations. We report the determined lower bound of \eqref{eqn_EiCP} as $e^{f(x)}$ and the required computation time in seconds. Note that, while the number of iterations times $q$ is constant per row, the required computation time is not. {Based on \Cref{table_qRccaEic}, larger values of $q$, i.e., $q \geq 20$, yield both better bounds and lower computation times. This is a result of the low density $d$: since $d$ is low, computing gradients of size $q$, with $q$ large, is relatively cheap, and so the results improve when taking larger gradient vectors (even the PGM, with $q = n$, performs well on such sparse matrices). When $q$ is small, each iteration requires less time, but also brings little improvement to the objective value. Since every iteration of the \qRCCD{} algorithm incurs considerable overhead from indexing matrices $A$ and $B$ based on $J$, a large value of $q$ performs better. A large $q$ ensures that you perform less iterations in the same time as a small $q$, and thus lose less overhead time. Moreover, the lower number of iterations compared to small $q$, is compensated  by the fact that iterations for large $q$ improve the objective value more, compared to small $q$.
\begin{table}[]
\centering
\caption{The \qRCCD{} algorithm for the EiC problem averaged over 50 runs ($n=10^5, \, d = 10^{-4})$}
\label{table_qRccaEic}
\begin{tabular}{l|llllll|c}
\hline
$q$      & 2       & 5       & 20     & 50     & 100    & 200    & \multicolumn{1}{l}{\textbf{Iter.}} \\ \hline
Bound & 46.323 & 56.467  & 73.887  & 74.130  & 72.251  & 68.403  & \multirow{2}{*}{$1 \cdot 10^7 / q$} \\
Time (s) & 124.130 & 73.017  & 23.055 & 11.461 & 7.705  & 5.984  &                                    \\ \hline
Bound & 73.526 & 84.044  & 112.827 & 110.987 & 111.608 & 105.747 & \multirow{2}{*}{$2 \cdot 10^7 / q$} \\
Time (s) & 239.199 & 144.536 & 45.873 & 22.951 & 15.376 & 11.981 &                                    \\ \hline
Bound & 91.455 & 103.666 & 141.461 & 137.785 & 140.309 & 132.169 & \multirow{2}{*}{$3 \cdot 10^7 / q$} \\
Time (s) & 353.920 & 215.922 & 68.906 & 34.461 & 23.076 & 17.974 &                                    \\ \hline
\end{tabular}
\end{table}

Next, we investigate the convergence properties of the \qRCCD{} algorithm towards a stationary point. For this purpose, we generate two matrices $A$ and $B$ from \eqref{eqn_feasibleMatSet}, with $n = 3000$, and density $d = 10^{-3}$. We set $q = 750$ and run the \qRCCD{} algorithm for $7.5 \cdot 10^5$ iterations, while tracking the value of $\log_{10}(\widetilde{M})$, see \eqref{eqn_wideTildeMDef}, during these iterations\footnote{It can occur that $\widetilde{M} = 0$, in which case the logarithm is not defined. To circumvent this, we instead track the value of $\log_{10}(\max\{ \widetilde{M}, 10^{-16} \})$.}. We repeat this procedure $50$ times, and present the averaged results in \Cref{figure_ConvergenceM}. The figure clearly demonstrate convergence of the algorithm towards a stationary point.}

\begin{figure}[h]
    \centering
    \caption{Convergence of the \qRCCD{} algorithm $(n = 3000, \, d = 10^{-3}, \, q =750)$}
    \label{figure_ConvergenceM}
\begin{tikzpicture}[trim axis left, trim axis right]
\begin{axis}[
    xlabel={Number of iterations ($10^5$)},
    ylabel={$\log_{10}(\widetilde{M}) \quad \quad \quad$},
    ylabel style={rotate=-90},
    ymin=-7, ymax=1,
    xmin=0, xmax=7.5,
    ytick={-7,-6,-5,-4,-3,-2,2,-1,0,1},
    xtick={0,1,2,3,4,5,6,7},
    legend pos=north west,
    ymajorgrids=true,
    grid style=dashed,
]

\addplot[
    color=black,
    ]
    coordinates {
    (0.01,0.79)(0.01,0.58)(0.02,0.54)(0.03,0.55)(0.04,0.53)(0.05,0.52)(0.06,0.50)(0.07,0.48)(0.08,0.44)(0.09,0.40)(0.10,0.36)(0.11,0.31)(0.12,0.27)(0.13,0.22)(0.14,0.17)(0.15,0.12)(0.16,0.07)(0.17,0.02)(0.18,-0.03)(0.19,-0.08)(0.20,-0.13)(0.21,-0.17)(0.22,-0.21)(0.23,-0.26)(0.24,-0.29)(0.25,-0.33)(0.26,-0.37)(0.27,-0.40)(0.28,-0.44)(0.29,-0.47)(0.30,-0.50)(0.31,-0.53)(0.32,-0.56)(0.33,-0.58)(0.34,-0.61)(0.35,-0.64)(0.36,-0.66)(0.37,-0.69)(0.38,-0.71)(0.39,-0.74)(0.40,-0.76)(0.50,-0.96)(0.60,-1.14)(0.70,-1.29)(0.80,-1.42)(0.90,-1.54)(1.00,-1.65)(1.10,-1.76)(1.20,-1.86)(1.30,-1.95)(1.40,-2.04)(1.50,-2.12)(1.60,-2.20)(1.70,-2.28)(1.80,-2.36)(1.90,-2.43)(2.00,-2.50)(2.10,-2.57)(2.20,-2.64)(2.30,-2.71)(2.40,-2.77)(2.50,-2.84)(2.60,-2.90)(2.70,-2.96)(2.80,-3.02)(2.90,-3.08)(3.00,-3.14)(3.10,-3.20)(3.20,-3.26)(3.30,-3.32)(3.40,-3.38)(3.50,-3.43)(3.60,-3.49)(3.70,-3.55)(3.80,-3.61)(3.90,-3.66)(4.00,-3.72)(4.10,-3.77)(4.20,-3.83)(4.30,-3.89)(4.40,-3.94)(4.50,-4.00)(4.60,-4.05)(4.70,-4.11)(4.80,-4.17)(4.90,-4.22)(5.00,-4.28)(5.10,-4.34)(5.20,-4.39)(5.30,-4.45)(5.40,-4.51)(5.50,-4.57)(5.60,-4.63)(5.70,-4.69)(5.80,-4.75)(5.90,-4.81)(6.00,-4.87)(6.10,-4.93)(6.20,-4.99)(6.30,-5.06)(6.40,-5.12)(6.50,-5.19)(6.60,-5.26)(6.70,-5.32)(6.80,-5.39)(6.90,-5.47)(7.00,-5.54)(7.10,-5.61)(7.20,-5.69)(7.30,-5.77)(7.40,-5.85)(7.50,-5.93)
    };
\end{axis}
\end{tikzpicture}
\end{figure}
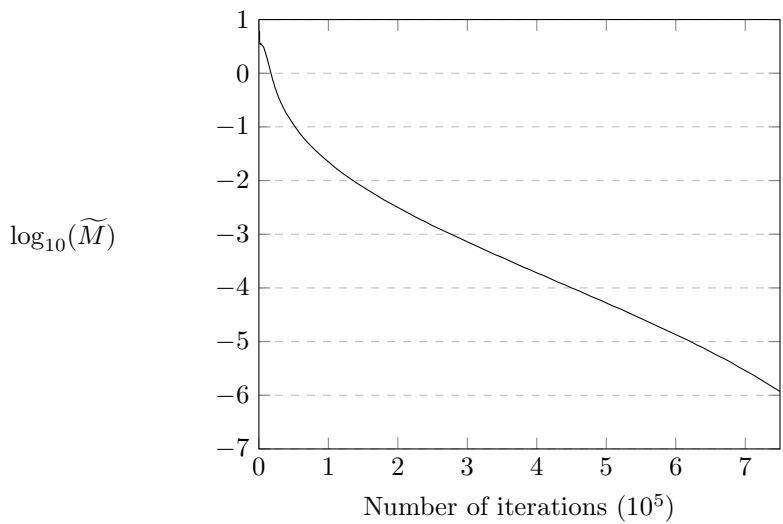

{Lastly, in \Cref{big_eic_table}, we investigate the performance of the \qRCCD{} algorithm compared to PGM, 2-RCD and the deterministic 2-coordinate descent algorithm by Amir \citeauthor{beck20142} \cite{beck20142}, abbreviated as \AbAlg{}. At each iteration, the \AbAlg{} algorithm picks the two coordinates that, when updated, decrease a specific stationarity measure the most. Let us briefly present the approach used in \AbAlg{} for the index selection, specifically for the maximization problem \eqref{eqn_EiCP}. \AbAlg{} computes first $\argmax_i \, (\nabla f(x))_i$, for  the current solution $x$. Then \AbAlg{} constructs another vector that depends on $\max_i \, (\nabla f(x))_i$, and  computes the index corresponding to the largest element of this new vector. In the EiC problem, both these vectors are dense and $n$-dimensional.

We test these algorithms on six different sets of matrices $A$ and $B$ from \eqref{eqn_feasibleMatSet}, with varying sizes $n$ and densities $d$, which refers to the density of both $A$ and $B$. The number of nonzeros in matrix is denoted $\texttt{nnz}$, and is given by $n^2 d$. Note that the tested matrices are much more dense than those from numerical results in \cite{patrascu2015efficient}. We report the attained objective values as $e^{f(x)}$, see \eqref{eqn_EiCP}, per method at 15, 30 and 60 minutes. For the \qRCCD{} and 2-RCD algorithms, we average the objective values over five runs, and have set $q = 50$ and blocksize ${\mathcal{B}} = 25$, which seemed to attain high objective values based on preliminary numerical tests. We display the results in \Cref{big_eic_table}, and mark the best objective value in boldface.

For $n = 10^5$, \AbAlg{} performs well: it converges to a stationary point in 15 minutes, but it is unable to escape this local maximum. In the same instances, the convergence of \qRCCD{} to stationary points is slower. However, due to its random element, the  \qRCCD{} algorithm 
may find more favorable (with higher objective) stationary points.
This allows the \qRCCD{} algorithm to outperform the \AbAlg{} in the first instance at 60 minutes.

For all three instances with $n = 10^5$, the \qRCCD{} algorithm is unable to converge to any stationary point within the time limit. This claim is justified as follows: the output of our algorithm shows that 
the objective values of the \qRCCD{} algorithm at minute 60, was noticeably higher compared to the objective values at minute 59 (average increase of $ 1.01 \% $ over all $n = 10^5$ instances).

When $n = 10^6$ or $n = 10^7$, \qRCCD{} is clearly the superior algorithm. 
The iterations of the \AbAlg{} and PGM algorithms become too costly
due to their operations on dense $n$-dimensional vectors. The computational cost of 2-RCD is similar to that of \qRCCD{}. However, it is still consistently outperformed by the \qRCCD{} algorithm, as the data shows.
}

\begin{table}[]
\centering
\caption{Objective values of different methods for the large scale EiC problem ($q = 50, \, {\mathcal{B}} = 25$)}
\label{big_eic_table}
\begin{tabular}{c|c|c|l|l|l|l|l}
\hline
\multicolumn{1}{l|}{$\log_{10}(n)$} & \multicolumn{1}{l|}{$d$} & \multicolumn{1}{l|}{$\texttt{nnz} (:= n^2 d)$} & \AbAlg{} & PGM & \qRCCD{} & 2-RCD & \textbf{T. (min)} \\ \hline
\multirow{3}{*}{5} & \multirow{3}{*}{$\frac{1}{2} \cdot 10^{-2}$} & \multirow{3}{*}{$\frac{1}{2} \cdot 10^{8}$} & \textbf{852.80} & 79.57 & 494.29 & 286.62 & 15 \\
 &  &  & \textbf{852.80} & 120.43 & 734.96 & 426.91 & 30 \\
 &  &  & 852.80 & 178.42 & \textbf{952.72} & 588.05 & 60 \\ \hline
\multirow{3}{*}{5} & \multirow{3}{*}{$10^{-2}$} & \multirow{3}{*}{$10^{8}$} & \textbf{2202.94} & 34.00 & 240.89 & 195.39 & 15 \\
 &  &  & \textbf{2202.94} & 53.30 & 383.76 & 296.83 & 30 \\
 &  &  & \textbf{2202.94} & 80.81 & 590.80 & 405.25 & 60 \\ \hline
\multirow{3}{*}{5} & \multirow{3}{*}{$2 \cdot 10^{-2}$} & \multirow{3}{*}{$2 \cdot 10^{8}$} & \textbf{1295.42} & 9.99 & 147.98 & 88.59 & 15 \\
 &  &  & \textbf{1295.42} & 19.60 & 253.13 & 142.84 & 30 \\
 &  &  & \textbf{1295.42} & 30.95 & 387.43 & 241.92 & 60 \\ \hline
\multirow{3}{*}{6} & \multirow{3}{*}{$10^{-4}$} & \multirow{3}{*}{$10^{8}$} & 1.39 & 14.44 & \textbf{49.53} & 44.52 & 15 \\
 &  &  & 2.56 & 26.24 & \textbf{80.49} & 70.41 & 30 \\
 &  &  & 5.60 & 43.11 & \textbf{136.95} & 114.85 & 60 \\ \hline
\multirow{3}{*}{6} & \multirow{3}{*}{$10^{-3}$} & \multirow{3}{*}{$10^{9}$} & 1.03 & 1.06 & \textbf{5.86} & 4.87 & 15 \\
 &  &  & 1.15 & 1.13 & \textbf{12.19} & 8.78 & 30 \\
 &  &  & 1.56 & 1.30 & \textbf{23.16} & 15.99 & 60 \\ \hline
\multirow{3}{*}{7} & \multirow{3}{*}{$10^{-5}$} & \multirow{3}{*}{$10^{9}$} & 1.00 & 1.40 & \textbf{7.45} & 6.94 & 15 \\
 &  &  & 1.01 & 1.98 & \textbf{15.26} & 14.38 & 30 \\
 &  &  & 1.04 & 3.73 & \textbf{25.75} & 23.78 & 60 \\ \hline
\end{tabular}
\end{table}

\section{Conclusions}
In this paper, we propose a random coordinate descent algorithm for minimizing a non-convex objective function subject to one linear constraint and bounds on the variables, see~\Cref{Alg1}.
The \qRCCD{} algorithm  randomly selects $q$ ($q \geq 2$) variables and updates them  based on the solution of an appropriate convex optimization problem, see \eqref{update_subproblem}.
We prove that any accumulation point of 
the sequence generated by the \qRCCD{} algorithm, using the uniform distribution, is a stationary point for~\eqref{general}, see~\Cref{thm:accumulation point}.  The convergence rate for the expected values of the {stationarity measure} is given in~\Cref{thm:convergenceRate}.
The convergence rates of  
the $2$-RCCD algorithm  and the algorithm from~\cite{patrascu2015efficient}, for the case that blocks are of size one, coincide. The  convergence rate of  the~\qRCCD{} algorithm improves for $q>2$.

We test the \qRCCD{} algorithm for solving the \BlueUpdate{D$k$S} problem and the \BlueUpdate{EiC} problem in \Cref{sect:numerics}. Extensive numerical results show that the quality of bounds improves as $q$ increases,  and that the~\qRCCD{} algorithm converges towards a stationary point. \BlueUpdate{In both these problems, the \qRCCD{} algorithm compares favourably with the other algorithms. In particular, \Cref{big_eic_table} demonstrates that for large scale EiC problems ($n \geq 10^6$), the \qRCCD{} algorithm significantly outperforms \AbAlg{} and PGM, and also improves  on the 2-RCD algorithm.}

\BlueUpdate{\medskip\medskip

\noindent \textbf{Acknowledgements.} We thank two anonymous reviewers for providing us with insightful feedback on earlier versions of this manuscript.\\

\noindent \textbf{Conflict of interest statement.} The authors have no conflicts of interest to declare.}

\bibliography{refs}
\end{document}